 \def\ps@pprintTitle{%
 	\let\@oddhead\@empty
 	\let\@evenhead\@empty
 	\def\@oddfoot{\footnotesize\itshape
 		{} \hfill\today}%
 	\let\@evenfoot\@oddfoot
 }
\newtheorem{theor}{Theorem}[section]
\newtheorem{prop}[theor]{Proposition}
\newtheorem{cor}[theor]{Corollary}
\newtheorem{lemma}[theor]{Lemma}
\theoremstyle{definition} 
\newtheorem{defin}[theor]{Definition}
\newtheorem{rem}[theor]{Remark}
\newtheorem{ex}{Example}
\newtheorem{conv}{Convention}
\DeclareMathOperator{\Sym}{Sym}
\DeclareMathOperator{\id}{id}
\DeclareMathOperator{\Aut}{Aut}
\DeclareMathOperator{\Ret}{Ret}
\begin{document}

\begin{frontmatter}
	\title{Classification of uniconnected involutive solutions of the Yang-Baxter equation with odd size and a Z-group permutation group
}
	\author{M.~CASTELLI	\tnoteref{mytitlenote}}
	\tnotetext[mytitlenote]{The author is member of GNSAGA (INdAM).}
	\ead{marco.castelli@unisalento.it - marcolmc88@gmail.com}
	\address{ Lecce (Italy)}

\begin{abstract}
In the first part of this paper, we investigate the retraction of finite uniconnected involutive non-degenerate set-theoretic solutions of the Yang-Baxter equation by means of left braces, giving a precise description in some cases. In the core of the paper, we also use left braces to classify all the uniconnected involutive non-degenerate set-theoretic solutions having odd size and a Z-group permutation group. As an application, we classify all the uniconnected involutive non-degenerate solutions having odd square-free size.

\end{abstract}

\begin{keyword}
\texttt{set-theoretic solution\sep Yang-Baxter equation\sep brace \sep cycle set}
\MSC[2020] 
16T25\sep 81R50 \sep 20N02 \sep 20E22
\end{keyword}
\end{frontmatter}

\section*{Introduction}
A \emph{set-theoretic solution of the Yang-Baxter equation} on a non-empty set $X$ is a pair $\left(X,r\right)$, where 
$r:X\times X\to X\times X$ is a map such that the relation
\begin{align*}
\left(r\times\id_X\right)
\left(\id_X\times r\right)
\left(r\times\id_X\right)
= 
\left(\id_X\times r\right)
\left(r\times\id_X\right)
\left(\id_X\times r\right)
\end{align*}
is satisfied.  
The paper by Drinfel'd \cite{drinfeld1992some} moved the interest of several researchers for finding and classifying solutions of this equation in the last thirty years.
Writing a solution $(X,r)$ as $r\left(x,y\right) = \left(\lambda_x\left(y\right)\rho_y\left(x\right)\right)$, with
$\lambda_x, \rho_x$ maps from $X$ into itself, for every $x\in X$, we say that $(X, r)$ is \emph{non-degenerate} if $\lambda_x,\rho_x\in \Sym_X$, for every $x\in X$, and \emph{involutive} if $r^2=\id_{X\times X}$. Over the years, the involutive non-degenerate solutions have been widely studied starting from the seminal papers by Gateva-Ivanova and Van den Bergh \cite{gateva1998semigroups}, Gateva-Ivanova and Majid \cite{gateva2008matched}, and Etingov, Schedler, and Soloviev \cite{etingof1998set}.\\
In particular, in \cite[Section 2]{etingof1998set} the class of \textit{indecomposable solutions} was introduced. Technically, an involutive non-degenerate solution $(X,r)$ is said to be \emph{decomposable} if there exists a non-trivial partition $\{Y,Z\}$ such that $(Y,r_{|_{Y\times Y}})$ and $(Z,r_{|_{Z\times Z}})$ are again solutions; otherwise, $(X,r)$ is called \emph{indecomposable}. In the same paper, the authors studied these solutions by means of some associated algebraic structures, such as the \textit{structure group} and the \textit{associated permutation group} (see \cite{etingof1998set} for more details). In this context, a remarkable result gives a complete classification of the involutive non-degenerate solutions having a prime number of elements (see \cite[Theorem 2.13]{etingof1998set}). The paper of Etingof, Schedler and Soloviev stimulated the study of this class of solutions and in the last years several results were obtained in the classification of indecomposable solutions. In \cite{cacsp2018} and \cite{smock} a first attempt to construct examples of indecomposable involutive solutions was given by using \textit{cycle sets} and \textit{left braces}, an algebraic structures strictly related to involutive solutions (see \cite{rump2005decomposition,rump2007braces} for more details). In \cite{capiru2020} some results involving the concrete classifications of indecomposable solutions were given: indeed, the authors completely classified the indecomposable involutive solutions having size $pq$ (where $p$ and $q$ are not necessarily distinct prime numbers) and abelian permutation group. This result was extended some months later by Jedlička, Pilitowska and Zamojska-Dzenio in \cite{JePiZa20x}, where they classified all the indecomposable involutive solutions having multipermutation level $2$ and abelian permutation group. In the last year, the classification obtained in \cite{capiru2020} was extended by Rump in \cite{Ru20} and by Jedlička, Pilitowska and Zamojska-Dzenio in \cite{jedlivcka2021cocyclic} in an other direction: indeed, a complete classification of the indecomposable solutions with cyclic permutation group (without restriction on the cardinality and the multipermutation level) was exhibited. In particular, in \cite[Theorem 3.9 and Corollary 3.10]{jedlivcka2021cocyclic}, the authors provided the precise structure of the retraction of such a solutions and give a formula to compute their multipermutation level. At the same time, using different tools, in \cite{cedo2020primitive} Ced\'o, Jespers and Okni\'nski classified the indecomposable involutive solutions having primitive permutation group, showing that they coincide with the ones having a prime number of elements.\\
In this paper, we study the indecomposable involutive solutions with regular permutation group. This is also motivated by a recent paper due by Rump, in which he developed a machinery (see \cite[Theorems 2-3]{rump2020}) to classify all the indecomposable involutive solutions for which the classification of the ones with regular permutation group is the first fundamental step. Following Rump's terminology, we will call these solutions \textit{uniconnected}. In the first part, we take advantage of the results obtained in 
\cite{rump2019classification,Ru20} to study the retraction of uniconnected solutions by means of left braces, giving special attention on the solutions associated to cyclic left braces and the ones having abelian permutation group. In particular, we show that the retractions of these solutions can be completely understood in terms of the retraction of the associated left braces. As applications of these results, we answer to \cite[Question 3.11]{jedlivcka2021cocyclic} and we extend a result on retractability of solutions obtained in \cite[Proposition 2]{capiru2020}.\\
In the core of the paper, we study uniconnected involutive solutions having odd size and a Z-group permutation group by means of their associated left braces. Recall that a finite group is said to be a \emph{Z-group} if all its Sylow subgroups are cyclic. We use the classification of cyclic left braces given in \cite{rump2007classification,rump2019classification} together with \cite[Theorem 1]{rump2020} to develop a concrete construction of these solutions and we cut the isomorphism classes using a theorem due by Bachiller, Ced\'o and Jespers \cite[Theorem 4.1]{bachiller2016solutions}. In analogy to the results obtained in \cite[Theorem 3.15]{jedlivcka2021cocyclic} and \cite[Corollary 3.10]{jedlivcka2021cocyclic} for involutive  solutions with cyclic permutation group, we exhibit a set of invariants for uniconnected solutions having odd size and a Z-group permutation group and a formula to compute their multipermutation levels.
As an application, we provide a complete classification of uniconnected involutive non-degenerate solutions having square-free odd order. 

\section{Cycle sets: basic definitions and results}

In this section we give the background informations involving cycle sets that will be useful in the following. In \cite{rump2005decomposition}, a one-to-one correspondence between solutions and a special class of cycle sets, i.e., \emph{non-degenerate} cycle sets, was established. To illustrate this correspondence, let us firstly recall the following definition.

\begin{defin}[p. 45, \cite{rump2005decomposition}]
A pair $(X,\cdot)$ is said to be a cycle set if each left multiplication $\sigma_x:X\longrightarrow X,$ $y\mapsto x\cdot y$ is invertible and 
$$(x\cdot y)\cdot (x\cdot z)=(y\cdot x)\cdot (y\cdot z), $$
for all $x,y,z\in X$.  Moreover, a cycle set $(X,\cdot)$ is called \textit{non-degenerate} if the squaring map $x\mapsto x\cdot x$ is bijective.
\end{defin}

\noindent From now on, for a cycle set we mean a non-degenerate cycle set.

\begin{prop}[Propositions 1-2, \cite{rump2005decomposition}]\label{corrisp}
Let $(X,\cdot)$ be a cycle set. Then the pair $(X,r)$, where $r(x,y):=(\sigma_x^{-1}(y),\sigma_x^{-1}(y)\cdot x)$, for all $x,y\in X$, is a solution of the Yang-Baxter equation which we call the associated solution to $(X,\cdot)$. Moreover, this correspondence is one-to-one.
\end{prop}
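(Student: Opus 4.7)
\medskip

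\noindent\textbf{Proof plan.} The plan is to split the argument into three independent verifications: involutivity of the map $r$, the Yang-Baxter equation, and the bijectivity of the assignment $(X,\cdot)\mapsto (X,r)$.

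First I would check that $r^{2}=\id_{X\times X}$ by direct substitution. Setting $a:=\sigma_{x}^{-1}(y)$ and $b:=a\cdot x=\sigma_{a}(x)$, the definition gives $r(x,y)=(a,b)$, so $r(a,b)=(\sigma_{a}^{-1}(b),\sigma_{a}^{-1}(b)\cdot a)$. Using $\sigma_{a}^{-1}(\sigma_{a}(x))=x$ in the first coordinate and $x\cdot a=\sigma_{x}(\sigma_{x}^{-1}(y))=y$ in the second, both coordinates collapse to $(x,y)$. The non-degeneracy of $r$ (i.e.\ that the maps $\lambda_{x}:=\sigma_{x}^{-1}$ and $\rho_{y}:x\mapsto\sigma_{x}^{-1}(y)\cdot x$ are permutations of $X$) follows from invertibility of each $\sigma_{x}$ for $\lambda_{x}$, while for $\rho_{y}$ I would invoke the bijectivity of the squaring map $x\mapsto x\cdot x$ — this is precisely where the hypothesis ``non-degenerate cycle set'' gets used.

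Next, for the braid relation, rather than expanding the six-fold composition coordinatewise, I would pass to the standard equivalent formulation for an involutive non-degenerate solution: the YBE is equivalent to a single identity on the left-action maps, which after the substitutions $\lambda_{x}=\sigma_{x}^{-1}$ and $\rho_{y}(x)=\sigma_{\sigma_{x}^{-1}(y)}(x)$ takes the form
\[
\sigma_{x\cdot u}\,\sigma_{x}\;=\;\sigma_{u\cdot x}\,\sigma_{u}
\qquad (x,u\in X).
\]
Evaluating both sides at an arbitrary $z\in X$ yields
\[
(x\cdot u)\cdot(x\cdot z)\;=\;(u\cdot x)\cdot(u\cdot z),
\]
which is exactly the defining axiom of a cycle set (with $u$ playing the role of $y$). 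The implication therefore holds without any additional computation.

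Finally, to establish that the correspondence is one-to-one, I would exhibit its inverse: given an involutive non-degenerate solution $(X,r)$ with $r(x,y)=(\lambda_{x}(y),\rho_{y}(x))$, define $x\cdot y:=\lambda_{x}^{-1}(y)$. Reversing the translation above shows that the cycle set axiom is equivalent to the braid relation for $r$, and the bijectivity of $x\mapsto x\cdot x$ follows from the non-degeneracy of $r$ (specifically from invertibility of the diagonal map $x\mapsto r(x,x)$). A direct substitution then confirms that the two assignments $(X,\cdot)\leftrightarrow(X,r)$ are mutually inverse.

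The main obstacle is the bookkeeping in the braid relation: its brute-force expansion produces triple compositions of $\sigma$-maps whose matching is error-prone. The shortcut above — reducing the YBE to the single two-variable identity on $\sigma$ before evaluating at $z$ — is the step that does the real work and is the one I would write out most carefully; everything else is routine substitution using $\sigma_{x}\sigma_{x}^{-1}=\id$.
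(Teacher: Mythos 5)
The paper does not prove this statement at all: it is quoted verbatim as background from Rump's 2005 paper (Propositions 1--2 of the cited reference), so there is no in-paper argument to compare against. Your plan is essentially the standard proof from that reference and is correct in outline: the involutivity check is right, the translation $\lambda_x=\sigma_x^{-1}$ turning the cycle-set axiom into the identity $\sigma_{x\cdot u}\sigma_x=\sigma_{u\cdot x}\sigma_u$ is exactly the right computation, and the inverse assignment $x\cdot y:=\lambda_x^{-1}(y)$ is the correct way to see bijectivity of the correspondence. Two places are glibber than they should be. First, the claim that the braid relation then ``holds without any additional computation'' only relocates the work: the equivalence of the Yang--Baxter equation with the single identity $\lambda_x\lambda_{\lambda_x^{-1}(y)}=\lambda_y\lambda_{\lambda_y^{-1}(x)}$ for involutive left non-degenerate maps is precisely where the six-fold composition gets unwound, and you would need to prove or explicitly cite it. Second, the passage from bijectivity of the squaring map $x\mapsto x\cdot x$ to bijectivity of each $\rho_y$ is the actual content of Rump's Proposition 2 and is not a one-line substitution; relatedly, your phrase ``invertibility of the diagonal map $x\mapsto r(x,x)$'' is imprecise --- the relevant map is $x\mapsto\lambda_x^{-1}(x)$, which is the squaring map of the associated cycle set. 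Neither point is a wrong turn, but both are the non-routine steps and would need to be written out (or cited) for the proof to be complete.
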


In \cite{etingof1998set}, Etingof, Schedler and Soloviev introduced the so-called \textit{retract relation}, an equivalence relation on $X$ which we denote by $\sim_r$. If $(X,r)$ is a solution, then $x\sim_r y$ if and only if $\lambda_x=\lambda_y$, for all $x,y\in X$. In this way, we can define canonically another solution, having the quotient $X/\sim_r$ as underlying set, which is named \textit{retraction} of $(X,r)$ and is indicated by $\Ret(X, r)$. 
As one can expect, the retraction of a solution corresponds to the retraction of a cycle set. Tecnically, in \cite{rump2005decomposition} Rump defined the binary relation $\sim_\sigma$ on $X$ by $x\sim_\sigma y :\Longleftrightarrow \sigma_x = \sigma_y$ for all $x,y\in X$, and he showed that it is a congruence of $(X,\cdot)$. Moreover, he proved that the quotient $X/\sim$, which we denote by $\Ret(X)$, is a cycle set whenever $X$ is non-degenerate and he called it the \emph{retraction} of $(X,\cdot)$. As the name suggests, if $(X, \cdot)$ is the cycle set associated to a solution $(X,r)$, then the retraction $\Ret(X)$ is the cycle set associated to $\Ret(X,r)$. Besides, a cycle set $X$ is said to be \textit{irretractable} if $\Ret(X)=X$, otherwise it is called \textit{retractable}.

\medskip 

\noindent To study cycle sets (and hence involutive solutions), one can consider the retraction-process. Therefore the following definition is of crucial importance.

\begin{defin}
    A cycle set $X$ has \textit{multipermutation level $n$} if $n$ is the minimal non-negative integer such that $\Ret^n(X)$ has cardinality one, where 
    $$\Ret^0(X):=X \text{ \ and \ } \Ret^i(X):=\Ret(\Ret^{i-1}(X)) \text{, \ for }i>0. $$ 
From now on, we will write $mpl(X)=n$ to indicate that $X$ has multipermutation level $n$.
\end{defin}
\medskip

 A standard subgroup associated to a cycle set $X$ is the permutation group generated by the set $\{\sigma_x \, | \, x\in X\}$. It will be denoted by $\mathcal{G}(X)$ and we call it the \textit{associated permutation group}. Obviously, in terms of solutions, the associated permutation group is exactly the permutation group generated by the set $\{\lambda_x \, | \, x\in X\}$.
\medskip

\noindent In this paper, we focus our attention on cycle sets that are indecomposable. In particular, among these cycle sets, we will consider the uniconnected ones. 

\begin{defin}
A cycle set $(X,\cdot)$ is said to be \textit{indecomposable} if the permutation group $\mathcal{G}(X)$ acts transitively on $X$, otherwise it is said to be \textit{decomposable}. Moreover, $X$ is said to be \textit{uniconnected} if $\mathcal{G}(X)$ acts regularly on $X$.
\end{defin}

\noindent Clearly, an uniconnected cycle set is indecomposable, but the converse is not necessarily true.

\begin{rem}
In the rest of the paper, we will study indecomposable solutions by their associated cycle sets. In every case, all the results involving cycle sets can be translated in terms of solutions by \cref{corrisp}.
\end{rem}

\section{Left braces: basic definitions and results}

Now, we give the backgroud informations of left braces. We start giving the definition of these algebraic structures using the formulation contained in \cite{cedo2014braces}. For the original formulation, we refer the reader to \cite{rump2007braces}.

\begin{defin}[\cite{cedo2014braces}, Definition 1]
A set $A$ endowed of two operations $+$ and $\circ$ is said to be a \textit{left brace} if $(A,+)$ is an abelian group, $(A,\circ)$ a group, and the equality $a\circ (b + c)= a\circ b - a + a\circ c$ follows, for all $a,b,c\in A$. 
\end{defin}

\noindent 
From now on, if $(A,+,\circ)$ is a left brace, the group $(A,+)$ will be called the \textit{additive group} and the group $(A,\circ)$ will be called the \textit{multiplicative group}. An example of left brace can be constructed taking and abelian group $(A,+)$ and setting $a\circ b:=a+b$ for all $a,b\in A$. Through the paper, we call these left braces \emph{trivial}. Left braces $(A,+,\circ)$ with cyclic multiplicative group are intensively studied (see for example \cite{rump2007classification, rump2019classification}).

\begin{ex}\label{esecic}
Let $p$ be a prime number, $k,t$ natural numbers such that $t\leq k$, $(A,+)=(\mathbb{Z}/p^k\mathbb{Z},+)$ and the binary operation $\circ$ on $A$ by $a\circ b:=a+b+a b p^t$ for all $a,b\in A$. Then the triple $(A,+,\circ)$ is a left brace and $(A,\circ)$ is a cyclic group.  
\end{ex}

\noindent In \cite{rump2007classification}, Rump showed that every left brace with prime-power size, for some odd prime number, and cyclic multiplicative group can be constructed as in the previous example: for this reason, we will indicate this left brace as $B(p,k,t)$. Moreover, it is well-known that every left brace with odd size and cyclic multiplicative group is isomorphic to the direct product of left braces having prime-power size and cyclic multiplicative group (for more details, see \cite{rump2007classification,rump2019classification}).\\
A simple way to construct other left braces, which will be useful in the next sections, is the semidirect product of left braces. Given a left brace $(A,+,\circ)$, an automorphism of the left brace $A$ is a bijective function $\alpha$ from $A$ to itself such that $\alpha\in Aut(A,+)\cap Aut(A,\circ)$. Of course, the set consisting of all the automorphisms of $(A,+,\circ)$ is a subgroup of the permutations group on $A$ and we indicate it by $Aut(A,+,\circ)$. If $A_1$ and $A_2$ are left braces and $\alpha$ a homomorphism from $(A_2,\circ)$ to $Aut(A_1,+,\circ)$, the \emph{semidirect product of the left braces $A_1$ and $A_2$ via $\alpha$} is the left brace $(A_1\times A_2,+,\circ)$ such that $(A_1\times A_2,+)$ is the direct product of the additive groups $(A_1,+)$ and $(A_2,+)$ and $(A_1\times A_2,\circ)$ is the semidirect product of the multiplicative groups $(A_1,\circ)$ and $(A_2,\circ)$ via $\alpha$. In this way, the operations $+$ and $\circ$ of the semidirect product are given by
$$(a_1,a_2)+(b_1,b_2):=(a_1+b_1,a_2+b_2) $$
$$(a_1,a_2)\circ(b_1,b_2):=(a_1\circ b_1^{a_2},a_2\circ b_2) $$
for all $(a_1,a_2),(b_1,b_2)\in A_1\times A_2 $, where $b_1^{a_2} $ simply means $\alpha(a_2)(b_1)$, as the function $\alpha$ is clear from the context.

\medskip

\noindent Given a left brace $A$ and $a\in A$, let us denote by $\lambda_a:A\longrightarrow A$ the map from $A$ into itself defined by $\lambda_a(b):= - a + a\circ b,$ for all $b\in A$. As shown in \cite[Proposition 2]{rump2007braces} and \cite[Lemma 1]{cedo2014braces}, these maps have special properties. We recall them in the following proposition.
\begin{prop}\label{action}
Let $A$ be a left brace. Then, the following are satisfied: 
\begin{itemize}
\item[1)] $\lambda_a\in\Aut(A,+)$, for every $a\in A$;
\item[2)] the map $\lambda:A\longrightarrow \Aut(A,+)$, $a\mapsto \lambda_a$ is a group homomorphism from $(A,\circ)$ into $\Aut(A,+)$.
\end{itemize}
\end{prop}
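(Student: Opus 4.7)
The plan is to verify both claims by direct computation from the defining brace relation $a\circ(b+c) = a\circ b - a + a\circ c$, exploiting the commutativity of $(A,+)$ throughout.

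For item (1), additivity of $\lambda_a$ is immediate: expanding the definition and applying the brace axiom yields
\begin{align*}
\lambda_a(b+c) = -a + a\circ(b+c) = -a + a\circ b - a + a\circ c,
\end{align*}
which, since $(A,+)$ is abelian, I rearrange as $(-a + a\circ b) + (-a + a\circ c) = \lambda_a(b)+\lambda_a(c)$. For bijectivity I would postpone the argument until after the homomorphism formula for item (2) is in hand.

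The heart of the proof is the identity $\lambda_{a\circ b} = \lambda_a\circ\lambda_b$, for which I would compute
\begin{align*}
\lambda_a(\lambda_b(c)) = -a + a\circ(-b + b\circ c) = -a + a\circ(-b) - a + a\circ(b\circ c)
\end{align*}
by applying the brace axiom to the inner sum. The task then reduces to recognizing $-a + a\circ(-b) - a = -(a\circ b)$. For this I would first recall (or establish as a brief preliminary remark) that the additive and multiplicative identities of a left brace coincide, so $a\circ 0 = a$. Setting $c = -b$ in the brace axiom then gives $a = a\circ b - a + a\circ(-b)$, hence $a\circ(-b) = 2a - a\circ b$, and the desired relation follows by substitution. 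Plugging back in yields $\lambda_a\circ\lambda_b(c) = -(a\circ b) + (a\circ b)\circ c = \lambda_{a\circ b}(c)$, using associativity of $\circ$.

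With the homomorphism formula established, bijectivity of each $\lambda_a$ is automatic: since $\lambda_1 = \id_A$ (directly from $1\circ b = b$ together with the coincidence of identities), one obtains $\lambda_a\circ\lambda_{a^{-1}} = \lambda_1 = \id_A$ and similarly on the other side, where $a^{-1}$ denotes the inverse in $(A,\circ)$. Combined with the additivity already verified, this gives $\lambda_a\in\Aut(A,+)$ and finishes item (1), while item (2) is just a restatement of $\lambda_{a\circ b} = \lambda_a\circ\lambda_b$. I expect the only non-routine step to be the sign-chasing identity $a\circ(-b) = 2a - a\circ b$; the rest is mechanical once one is comfortable shuttling between $+$ and $\circ$ via the single brace axiom.
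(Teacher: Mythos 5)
Your proof is correct and complete: the additivity computation, the reduction of $\lambda_{a\circ b}=\lambda_a\lambda_b$ to the identity $a\circ(-b)=2a-a\circ b$ (itself obtained from $a\circ 0=a$, which follows by setting $b=c=0$ in the brace axiom), and the deduction of bijectivity from $\lambda_1=\id_A$ are all sound and in the right logical order. The paper itself gives no proof, citing \cite[Proposition 2]{rump2007braces} and \cite[Lemma 1]{cedo2014braces}, and your argument is essentially the standard computation carried out in those references.
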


\noindent The map $\lambda$ is of crucial importance to construct cycle sets (and hence involutive solutions) using left braces, as one can see in the following propositions.

\begin{prop}[Lemma 2, \cite{cedo2014braces}]\label{costru1}
Let A be a left brace and $\cdotp$ the binary operation on $A$ given by 
$$a\cdotp b:=\lambda_a^{-1}(b)$$
for all $a,b\in A$. Then, $(A,\cdotp)$ is a decomposable cycle set.
\end{prop}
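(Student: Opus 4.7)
The plan is to verify the three requirements for $(A,\cdot)$ to be a cycle set (invertibility of each $\sigma_a$, the cycle set identity, and bijectivity of the squaring map) and then separately observe decomposability. The invertibility is immediate: since $\lambda_a\in\Aut(A,+)$ by \Cref{action}(1), the map $\sigma_a=\lambda_a^{-1}$ is a bijection on $A$.

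The main computation is the cycle set identity $(a\cdot b)\cdot(a\cdot c)=(b\cdot a)\cdot(b\cdot c)$, which unravels to
\[
\lambda_{\lambda_a^{-1}(b)}^{-1}\lambda_a^{-1}=\lambda_{\lambda_b^{-1}(a)}^{-1}\lambda_b^{-1}
\]
or, by inversion, $\lambda_a\lambda_{\lambda_a^{-1}(b)}=\lambda_b\lambda_{\lambda_b^{-1}(a)}$. The key is to apply \Cref{action}(2): $\lambda_a\lambda_{\lambda_a^{-1}(b)}=\lambda_{a\circ\lambda_a^{-1}(b)}$. From the left brace axiom one derives $a\circ x=a+\lambda_a(x)$ (equivalently, $\lambda_a(b)=-a+a\circ b$), so $a\circ\lambda_a^{-1}(b)=a+b$. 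Thus $\lambda_a\lambda_{\lambda_a^{-1}(b)}=\lambda_{a+b}$, and symmetrically $\lambda_b\lambda_{\lambda_b^{-1}(a)}=\lambda_{b+a}$. Since $(A,+)$ is abelian, the two sides coincide, yielding the identity. This step is where I expect most of the work to sit, but it is entirely mechanical once the formula $a\circ x=a+\lambda_a(x)$ is in hand.

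For bijectivity of the squaring map $q\colon a\mapsto a\cdot a=\lambda_a^{-1}(a)$, I would argue as follows. Rewriting $q(a)=b$ as $a=\lambda_a(b)=-a+a\circ b$, that is $2a=a\circ b$, shows that $q(a)=b$ iff $b=a^{-1}\circ(a+a)$ (where $a^{-1}$ is the inverse of $a$ in $(A,\circ)$). This makes $q$ a composition of $a\mapsto a+a$ with fixed maps; alternatively, one checks injectivity directly and uses finiteness, or in the general case invokes the standard brace-theoretic fact (Rump) that the additive and multiplicative doubling maps together force $q$ to be bijective. I would cite the analogous assertion from \cite{cedo2014braces} or \cite{rump2007braces} rather than reproving it in detail.

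Finally, for the word \emph{decomposable}: since $\lambda_a\in\Aut(A,+)$, we have $\lambda_a^{-1}(0)=0$ for every $a\in A$, so $\sigma_a(0)=0$. Hence $\{0\}$ is invariant under every left multiplication, and $(\{0\},\cdot)$ together with $(A\setminus\{0\},\cdot)$ exhibits the required non-trivial partition whenever $|A|>1$. This immediately witnesses that $(A,\cdot)$ is decomposable, completing the proof.
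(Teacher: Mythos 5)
The paper does not actually prove this statement: it is quoted verbatim as Lemma~2 of \cite{cedo2014braces}, so there is no internal proof to compare against. Your argument is essentially the standard one and its core is correct: invertibility of $\sigma_a=\lambda_a^{-1}$ from \cref{action}(1), and the cycle set identity reduced via the homomorphism property of $\lambda$ and the identity $a\circ x=a+\lambda_a(x)$ to $\lambda_{a+b}=\lambda_{b+a}$, which holds because $(A,+)$ is abelian. The decomposability observation ($\sigma_a(0)=\lambda_a^{-1}(0)=0$ for all $a$, so the orbit of $0$ is $\{0\}$ and the action is not transitive for $|A|>1$) is also the right one.

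The one spot that does not hold up as written is the bijectivity of the squaring map. You correctly derive $q(a)=a^{-}\circ(a+a)$, but this is \emph{not} ``a composition of $a\mapsto a+a$ with fixed maps'': the outer factor $a^{-}\circ(\,\cdot\,)$ varies with $a$, so no bijectivity follows from that phrasing. The clean argument is shorter: since $0$ is the identity of $(A,\circ)$ (set $b=c=0$ in the brace axiom to get $a\circ 0=a$) and $\lambda$ is a group homomorphism, one has $\lambda_a^{-1}=\lambda_{a^{-}}$, hence
\[
q(a)=\lambda_{a^{-}}(a)=-a^{-}+a^{-}\circ a=-a^{-},
\]
so $q$ is the composition of inversion in $(A,\circ)$ with negation in $(A,+)$, both bijections. (One can check this agrees with your formula, since $a^{-}\circ(a+a)=a^{-}\circ a-a^{-}+a^{-}\circ a=-a^{-}$.) With that substitution, or simply with the citation to \cite{cedo2014braces} that you offer as a fallback, the proof is complete; the only residual caveat is the degenerate case $|A|=1$, where the singleton cycle set is transitive and hence indecomposable, an edge case the literature routinely ignores.
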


\noindent Before giving the following result, recall that a subset $X$ of a left brace $(A,+,\circ)$ is said to be a \emph{cycle base} if $X$ is a union of orbits respect to the action $\lambda$ and generates the additive group $(A,+)$. Moreover, a cycle base that is a single orbit is called \emph{transitive cycle base}.

\begin{prop}[Theorem 2, \cite{rump2020}]\label{rump20}
Let A be a left brace, $X$ a transitive cycle base and $g\in X$. Define on $A$ the binary operation $\bullet$ 
\begin{equation}\label{forind}
    a\bullet b:=(\lambda_a(g))^{-}\circ b,
\end{equation}
for all $a,b\in A$. Then, $(A,\bullet)$ is an uniconnected cycle set such that $\mathcal{G}(A)\cong (A,\circ)$. Conversely, every uniconnected cycle set can be constructed in this way.
\end{prop}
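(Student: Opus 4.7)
Plan: I would split the argument into the forward construction and the converse classification. For the forward direction, abbreviating $c_a := \lambda_a(g)$ makes the formula read $a\bullet b = c_a^{-}\circ b$, so each left multiplication $\sigma_a$ is the left translation by $c_a^{-}$ in the group $(A,\circ)$ and is automatically bijective. To verify the cycle set identity I would use that $\lambda$ is a multiplicative homomorphism (\cref{action}) to compute $c_{a\bullet b} = \lambda_{c_a^-}(\lambda_b(g)) = \lambda_{c_a}^{-1}(c_b)$, so that after expanding both sides of the identity and cancelling the common right factor $c_a^{-}\circ c$ versus $c_b^{-}\circ c$, the identity reduces to
\begin{align*}
c_a\circ\lambda_{c_a}^{-1}(c_b) \;=\; c_b\circ\lambda_{c_b}^{-1}(c_a).
\end{align*}
The basic brace identity $x\circ\lambda_x^{-1}(y) = x+y$, obtained from the definition $\lambda_x(z)=-x+x\circ z$ by taking $z=\lambda_x^{-1}(y)$, then collapses both sides to $c_a+c_b$, which coincide by commutativity of $+$.

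Non-degeneracy and uniconnectedness I would treat separately. For non-degeneracy, I would show the squaring $q(a) := a\bullet a = c_a^{-}\circ a$ is injective: if $q(a)=q(b)$, set $u := b\circ a^{-} = c_b\circ c_a^{-}$; combining with $c_b = \lambda_{u\circ a}(g) = \lambda_u(c_a)$ yields $u\circ c_a = \lambda_u(c_a)$, and since $u\circ c_a = u+\lambda_u(c_a)$ this forces $u=0$, hence $a=b$. Finiteness then gives bijectivity. For uniconnectedness, observe that $\mathcal{G}(A,\bullet)$ is the image under the Cayley embedding of the $\circ$-subgroup of $(A,\circ)$ generated by $X$; since $X$ is a $\lambda$-invariant generating subset of $(A,+)$, the additive subgroup $\langle X\rangle_{+}=A$ is automatically $\circ$-closed through the identity $a\circ b = a+\lambda_a(b)$, so by finiteness $X$ generates $(A,\circ)$ as well, and the Cayley embedding identifies $\mathcal{G}(A,\bullet)$ with $(A,\circ)$ acting regularly by left translation.

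For the converse, given a uniconnected cycle set $(Y,\cdot)$ I would take $A:=\mathcal{G}(Y)$ endowed with its canonical left brace structure coming from the associated involutive solution. Regularity of $\mathcal{G}(Y)$ on $Y$ together with a choice of base point $g_0\in Y$ produces a bijection $Y\cong A$ via $y\mapsto\phi_y$, where $\phi_y$ is the unique element sending $g_0$ to $y$. The relation $\phi_{y\cdot z}=\sigma_y\circ\phi_z$ transports the cycle set of $Y$ to $A$, and the choice $g:=\sigma_{g_0}^{-}$ together with $X$ equal to the $\lambda$-orbit of $g$ should recover \eqref{forind}. The technical heart of the converse, and the step I expect to be the main obstacle, is verifying the identification $\sigma_y = \lambda_{\phi_y}(g)^{-}$ for all $y\in Y$: this requires carefully unpacking the brace addition on $\mathcal{G}(Y)$ together with the relation between the cycle set operation and the $\lambda$-map, and it is precisely at this point that the regularity hypothesis is needed to ensure that $X$ is a \emph{transitive} cycle base rather than a union of several $\lambda$-orbits.
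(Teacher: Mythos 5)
First, a point of reference: the paper does not prove this statement at all --- it is imported verbatim as \cite[Theorem 2]{rump2020} --- so there is no in-paper argument to compare against and your attempt has to be judged on its own. Your forward direction is essentially a complete and correct proof. The reduction of the cycle set identity to $c_a\circ\lambda_{c_a}^{-1}(c_b)=c_b\circ\lambda_{c_b}^{-1}(c_a)$ via $c_{a\bullet b}=\lambda_{c_a}^{-1}(c_b)$, and its collapse to $c_a+c_b$ using $x\circ\lambda_x^{-1}(y)=x+y$, is exactly right, as is the injectivity argument $u+\lambda_u(c_a)=\lambda_u(c_a)\Rightarrow u=0$ for the squaring map (which, note, needs no finiteness; only your surjectivity step does, and in the infinite case one could instead invoke the bijectivity of $w\mapsto\lambda_w^{-1}(w)$ coming from \cref{costru1}). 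The one step you state backwards is the generation argument: asserting that $\langle X\rangle_{+}=A$ is $\circ$-closed is vacuous, since it is all of $A$. What you actually need is that $\langle X\rangle_{\circ}$ is closed under $+$ (and negation), which follows from $x+y=x\circ\lambda_x^{-1}(y)$ together with the $\lambda$-invariance of $X$, plus finiteness to dispose of $-x$. This is a standard fact and easily repaired, but as written the implication does not follow.

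The genuine gap is the converse. You identify the right strategy --- put the canonical left brace structure on $\mathcal{G}(Y)$, use regularity to transport the operation along $y\mapsto\phi_y$, and take $g$ to be determined by the base point --- but you explicitly defer the decisive verification that $\sigma_y=\lambda_{\phi_y}(g)^{-}$, i.e.\ that the transported operation really has the form \eqref{forind}. That computation, which requires unpacking the definition of the addition on $\mathcal{G}(Y)$ and its interaction with the relation $\phi_{y\cdot z}=\sigma_y\phi_z$, \emph{is} the converse; without it nothing has been shown. A second point you leave unaddressed is that the $\lambda$-orbit of $g$ must be checked to be a \emph{transitive cycle base}, i.e.\ to generate $(\mathcal{G}(Y),+)$ additively --- you only remark that regularity makes it a single orbit, but the generation property is where one uses that the $\sigma_y$ generate the permutation group. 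So: forward half solid (modulo the repairable generation step), converse half an honest but incomplete sketch of what is, in effect, the whole content of Rump's theorem.
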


\noindent In the two previous propositions, we saw that a left brace $A$ provide a decomposable cycle set and several indecomposable cycle sets, which in full generality are not isomorphic. The cycle set provided by \cref{costru1} will be called the \emph{decomposable associated cycle set}, and similarly a cycle set provided by \cref{rump20} will be called an \emph{uniconnected associated cycle set}.

\bigskip

Similarly to rings, the notion of ideal of a left brace was introduced in \cite{rump2007braces} and reformulated in \cite[Definition 3]{cedo2014braces}.
\begin{defin}
Let $A$ be a left brace. A subset $I$ of $A$ is said to be a \textit{left ideal} if it is a subgroup of the multiplicative group and $\lambda_a(I)\subseteq I$, for every $a\in A$. Moreover, a left ideal is an \textit{ideal} if it is a normal subgroup of the multiplicative group.
\end{defin}

\noindent Given an ideal $I$, it holds that the structure $A/I$ is a left brace called the \emph{quotient left brace} of $A$ modulo $I$. A special ideal of a left brace, introduced in \cite{rump2007braces}, is the socle. In the terms of \cite[Section 4]{cedo2014braces}, it is the following.

\begin{defin}
Let $A$ be a left brace. Then, the set 
$$
    Soc(A) := \{a\in A \ | \ \forall \,     b\in A \quad  a + b = a\circ b \}
$$
is named \emph{socle} of $A$.
\end{defin}

\noindent Clearly, $Soc(A) := \{a\in A \ | \ \lambda_a = \id_A\}$. It is well-known that $Soc(A)$ is an ideal of $A$ and it is strongly related to the retraction of its decomposable associated cycle set. The following proposition is implicitly contained in \cite[Proposition 7]{rump2007braces} and \cite[Lemma 3]{cedo2014braces}.

\begin{prop}\label{propzoc}
Let $A$ be a left brace and $(A,\cdotp)$ the decomposable associated cycle set. If $(A/ Soc(A),\cdotp)$ is the decomposable associated cycle set of the left brace $A/Soc(A)$, then $(A/Soc(A), \cdotp)$ coincides with the retraction $\Ret(A,\cdotp)$ of $(A,\cdotp)$.
\end{prop}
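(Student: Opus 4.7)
The plan is to verify both halves of the claim in sequence: first, that the equivalence classes of the retraction relation $\sim_\sigma$ on the decomposable associated cycle set coincide with the cosets of $Soc(A)$; second, that the induced cycle set operation on these classes agrees with the operation $[a]\cdot [b]=\overline{\lambda}_{[a]}^{-1}([b])$ coming from the quotient brace $A/Soc(A)$ via \cref{costru1}.

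For the first step, I would unwind the definitions. By construction of the decomposable associated cycle set, $\sigma_a=\lambda_a^{-1}$, so $a\sim_\sigma b$ is equivalent to $\lambda_a=\lambda_b$. Since $\lambda\colon (A,\circ)\to \Aut(A,+)$ is a group homomorphism by \cref{action}, this is in turn equivalent to $\lambda_{a\circ b^{-}}=\id_A$, i.e.\ $a\circ b^{-}\in Soc(A)$. Hence the $\sim_\sigma$-classes are exactly the right cosets of $Soc(A)$ in $(A,\circ)$; since $Soc(A)$ is an ideal, these coincide with the additive cosets, so the underlying set of $\Ret(A,\cdot)$ is literally $A/Soc(A)$.

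For the second step, write $[a]$ for the class of $a$ modulo $Soc(A)$. The retraction operation is $[a]\cdot [b]=[a\cdot b]=[\lambda_a^{-1}(b)]$. On the other side, the decomposable associated cycle set of the quotient brace $A/Soc(A)$ has operation $[a]\cdot [b]=\overline{\lambda}_{[a]}^{-1}([b])$, where $\overline{\lambda}$ is the $\lambda$-map of $A/Soc(A)$. By the very construction of the quotient left brace, $\overline{\lambda}_{[a]}([b])=[\lambda_a(b)]$, so $\overline{\lambda}_{[a]}^{-1}([b])=[\lambda_a^{-1}(b)]$, which matches the retraction operation on the nose.

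I do not expect a genuine obstacle here: the content is essentially a dictionary between the brace-theoretic quotient $A/Soc(A)$ and the cycle-set-theoretic quotient $\Ret(A,\cdot)$. The only subtle point worth stating explicitly is the identification of $Soc(A)$-cosets in $(A,+)$ and in $(A,\circ)$, which follows from $Soc(A)$ being an ideal together with $a\circ i=a+\lambda_a(i)$ and $\lambda_a(Soc(A))\subseteq Soc(A)$; everything else is a direct application of \cref{action} and the definitions of $\sim_\sigma$ and of the decomposable associated cycle set in \cref{costru1}.
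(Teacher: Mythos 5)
Your proposal is correct and follows the standard argument that the paper itself only cites (the proposition is stated as ``implicitly contained in'' \cite[Proposition 7]{rump2007braces} and \cite[Lemma 3]{cedo2014braces} rather than proved): identify the $\sim_\sigma$-classes with the cosets of $Soc(A)=\ker\lambda$ via \cref{action}, check that multiplicative and additive cosets agree because $a\circ i=a+\lambda_a(i)$ with $\lambda_a(Soc(A))\subseteq Soc(A)$, and observe that $\overline{\lambda}_{[a]}([b])=[\lambda_a(b)]$ makes the two operations match. Nothing is missing, and the one subtlety you flag (the coset identification) is exactly the right one to make explicit.
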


\begin{defin}
A left brace $A$ has \emph{finite multipermutation level} if the decomposable associated cycle set $(A,\cdotp)$ has finite multipermutation level. If $n$ is a natural number and a left brace $A$ is such that the decomposable associated cycle set $(A,\cdotp)$ has multipermutation level equal to $n$, then $A$ is said to be of multipermutation level $n$.
\end{defin}

\begin{ex}\label{esecicmpl}
Let $A$ be the left brace $B(p,k,t)$ given in \cref{esecic}. Then, the multipermutation level of $A$ is $ \lceil{\frac{k}{t}}\rceil $, where, if $z$ is a real number, $ \lceil{z}\rceil  $ is the greatest integer part of $z$ (see \cite[Corollary 3.10]{jedlivcka2021cocyclic}). Therefore, if $C$ is a left brace isomorphic to the direct product $B(p_1,k_1,t_1)\times...\times B(p_n,k_n,t_n) $, for some distinct prime numbers $p_1,...,p_n$, then the multipermutation level is equal to $max\{\lceil{\frac{k_1}{t_1}}\rceil,...,\lceil{\frac{k_n}{t_n}}\rceil \} $.
\end{ex}

\medskip

We conclude the section focusing our attention on \cref{rump20}. Note that the cycle sets obtained using this results can be also obtained using a construction developed in \cite{bachiller2016solutions}. Indeed, if in Theorem 3.1 of \cite{bachiller2016solutions} we take $I:=\{i\}$ as a set of size one given by a transitive cycle base $i$, $a_i$ an element of $i$, $J_i:=\{j\}$ a set of size one and $K_{i,j}:=\{0\}$ the trivial subgroup of the stabilizer of $a_i$ under the action $\lambda$, we can construct all the cycle sets provided in \cref{rump20}. Moreover, in \cite[Theorem 4.1]{bachiller2016solutions} a method to verify when two cycle sets, having permutation groups isomorphic as left braces, are isomorphic is provided. In the following we reformulate the result in our case.

\begin{prop}[Theorem 4.1, \cite{bachiller2016solutions}]\label{isom}
Let A be a left brace, $X_1$ and $X_2$ a transitive cycle bases, $g_1\in X_1$ and $g_2\in X_2$. Let $A_1$ (resp. $A_2$) be the cycle set constructed as in \cref{rump20} starting from $g_1$ and $X_1$ (resp. $g_2$ and $X_2$).  Then, $A_1$ and $A_2$ are isomorphic if and only if there exist $z\in A$ and $\psi\in Aut(A,+,\circ)$ such that $\psi(x_1)=\lambda_z(x_2)$.
\end{prop}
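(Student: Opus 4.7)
The plan is to read this proposition as a direct transcription of \cite[Theorem 4.1]{bachiller2016solutions} under the dictionary sketched in the paragraph immediately preceding the statement (noting in passing that $x_1,x_2$ should be read as $g_1,g_2$, since these are the distinguished elements of the cycle bases). First, I would identify the combinatorial input of the Bachiller--Ced\'o--Jespers construction (Theorem 3.1 of loc.\ cit.) in our specialised setting: the index set $I$ is a singleton $\{i\}$, $a_i$ is the chosen element $g_k$ of the transitive cycle base $X_k$, $J_i$ is a singleton $\{j\}$, and $K_{i,j}$ is the trivial subgroup of the $\lambda$-stabiliser of $g_k$. Under this dictionary, the cycle set produced by BCJ's formulas coincides with the one given by \eqref{forind}, so the uniconnected cycle sets $A_1$ and $A_2$ arising from \cref{rump20} are instances of BCJ's construction.

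Next I would write down the isomorphism criterion of \cite[Theorem 4.1]{bachiller2016solutions} in this minimal data. In full generality the criterion produces a bijection $\tau$ between index sets, a brace automorphism $\psi\in\Aut(A,+,\circ)$, an element $z\in A$, and a coherent family of isomorphisms between the prescribed subgroups $K_{i,j}$, subject to a single compatibility equation linking $\psi$ and $z$ to the chosen orbit representatives. With $|I|=|J_i|=1$ and $K_{i,j}=\{0\}$, the bijection $\tau$ is forced, the subgroup-isomorphism data is empty, and the compatibility equation collapses to the relation $\psi(g_1)=\lambda_z(g_2)$ stated in the proposition.

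Finally I would check that no additional orbit condition survives the specialisation. Since $\psi$ is a brace automorphism it permutes the $\lambda$-orbits of $A$; because each $X_k$ is a single orbit that generates $(A,+)$, the relation $\psi(g_1)=\lambda_z(g_2)\in X_2$ automatically implies $\psi(X_1)=X_2$, which is exactly the compatibility of $\psi$ with the cycle bases required (implicitly) by BCJ. The main obstacle is bookkeeping rather than mathematical depth: one has to translate carefully between BCJ's general framework and Rump's construction of \cref{rump20}, and verify that each piece of the BCJ correspondence either vanishes in our setting or reduces precisely to the single equation above, so that $(z,\psi)$ parametrises isomorphisms between $A_1$ and $A_2$ without any hidden auxiliary data.
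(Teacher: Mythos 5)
Your proposal is correct and matches the paper's approach exactly: the paper offers no independent proof, presenting the proposition as a direct specialisation of \cite[Theorem 4.1]{bachiller2016solutions} under precisely the dictionary you describe ($I=\{i\}$, $J_i=\{j\}$, $K_{i,j}=\{0\}$), which is spelled out in the paragraph preceding the statement. You also correctly flag that $x_1,x_2$ in the statement should read $g_1,g_2$.
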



\section{Retraction of uniconnected cycle sets}

In this section, we study the retraction of uniconnected cycle sets by means of left braces. We show that, in some cases, the underlying set of the retraction of these cycle sets is the quotient of the associated left brace by its socle and we use these facts to give an answer to \cite[Question 3.11]{jedlivcka2021cocyclic}. The control of the retraction-structure of these cycle sets will be useful in the last part of the section, in which we extend the results obtained in \cite{capiru2020} on the retractability of indecomposable cycle sets with abelian permutation group.

\begin{lemma}\label{risu2}
Let $A$ be a left brace. Suppose that $A$ has a transitive cycle base and let $(A,\bullet)$ be an uniconnected associated cycle set. Moreover, let $g\in A$ be an element of a transitive cycle base such that $a\bullet b=\lambda_a(g)^{-}\circ b$ for all $a,b\in A$. Then, $\Ret(A,\bullet)$ coincides with the set of the left cosets $A/H$, where $H$ is equal to the subgroup of $(A,\circ)$ given by $H:=\{h\ |\ h\in A,\ \lambda_h(g)=g \}$. Moreover, the socle of $A$ is contained in $H$.
\end{lemma}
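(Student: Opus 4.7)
The plan is to compute the equivalence classes of the retract relation $\sim_\sigma$ directly from the formula $a\bullet b=\lambda_a(g)^{-}\circ b$. First I would observe that the left multiplication $\sigma_a$ of $(A,\bullet)$ is the map $b\mapsto \lambda_a(g)^{-}\circ b$, which, since $(A,\circ)$ is a group acting on itself by left multiplication, is entirely determined by the element $\lambda_a(g)\in A$. Consequently, $\sigma_a=\sigma_b$ if and only if $\lambda_a(g)^{-}=\lambda_b(g)^{-}$, i.e., $\lambda_a(g)=\lambda_b(g)$.

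Next, using \cref{action} (i.e., the fact that $\lambda\colon (A,\circ)\to\Aut(A,+)$ is a group homomorphism), the equality $\lambda_a(g)=\lambda_b(g)$ is equivalent to $\lambda_{b^{-}\circ a}(g)=g$, which by definition means that $b^{-}\circ a\in H$. Therefore $a\sim_\sigma b$ if and only if $a$ and $b$ lie in the same left coset of $H$ in $(A,\circ)$. A quick check also shows that $H$ is a subgroup of $(A,\circ)$: if $h_1,h_2\in H$, then $\lambda_{h_1\circ h_2}(g)=\lambda_{h_1}(\lambda_{h_2}(g))=\lambda_{h_1}(g)=g$ and $\lambda_{h_1^{-}}(g)=\lambda_{h_1}^{-1}(g)=g$, both again by \cref{action}. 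Putting these pieces together, the quotient set $\Ret(A,\bullet)=A/\!\sim_\sigma$ coincides with the set of left cosets $A/H$.

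For the second assertion, if $a\in\Soc(A)$ then $\lambda_a=\id_A$, hence in particular $\lambda_a(g)=g$, which gives $a\in H$; thus $\Soc(A)\subseteq H$. I do not foresee any serious obstacle: the argument is essentially a direct translation of the definition of $\sim_\sigma$ through the explicit description of $\bullet$ provided by \cref{rump20}, and the only mild point requiring care is the bookkeeping between the two operations of the brace when passing from $\lambda_a(g)^{-}=\lambda_b(g)^{-}$ to the coset description of $H$ via the homomorphism property of $\lambda$.
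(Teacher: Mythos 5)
Your proof is correct and follows essentially the same route as the paper's: both identify $\sigma_a=\sigma_b$ with $\lambda_a(g)=\lambda_b(g)$, use the homomorphism property of $\lambda$ to translate this into membership of $a^{-}\circ b$ in the stabilizer $H$, and note that $Soc(A)\subseteq H$ since $\lambda_a=\id_A$ for $a$ in the socle. You merely spell out the subgroup verification that the paper leaves as a standard calculation.
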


\begin{proof}
By a standard calculation, one can show that $H$ is a subgroup of $(A,\circ)$ containing $Soc(A)$. Since $\lambda$ is a homomorphism from $(A,\circ)$ to $Aut(A,+)$, it follows that $\sigma_a=\sigma_b$ if and only if $\lambda_{a^{-}\circ b}(g)=\lambda_{a^{-}}(\lambda_b(g))=g$, therefore the retract relation coincides with the equivalence relation induced by $H$.
\end{proof}

\noindent In general, we are not able to state if for a left brace $A$ the subgroup $H$ of the previous proposition is equal to $Soc(A)$. However, this surely happens in some cases, which we see in the following results. At first, we need a lemma. Recall that if $A$ is a \emph{cyclic left brace}, i.e. a left brace with cyclic additive group, then it surely has a transitive cycle base: it is sufficient to consider the orbit of a generator $a$ of the additive group respect to the action $\lambda$.

\begin{lemma}\label{gentrans}
Let $(A,+,\circ)$ be a finite cyclic left brace, $X$ a transitive cycle base and $g\in X$. Then, $g$ is a generator of the additive group $(A,+)$. 
\end{lemma}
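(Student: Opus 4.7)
My plan is to combine two easy facts: (i) $\lambda_a$ is an additive automorphism, so it preserves the additive order of every element; (ii) in a finite cyclic group there is a unique subgroup of each order dividing $|A|$, and every element of additive order dividing $d$ lies in that unique subgroup.

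First I would set $n:=|A|$ and let $d$ be the additive order of $g$ in $(A,+)$. By \cref{action}, each $\lambda_a$ belongs to $\Aut(A,+)$, so $\lambda_a(g)$ has the same additive order $d$ as $g$ for every $a\in A$. Since $X$ is the $\lambda$-orbit of $g$, this means every element of $X$ has additive order exactly $d$.

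Next I would invoke the cyclicity of $(A,+)$: there is a unique subgroup $C_d$ of $(A,+)$ of order $d$, and every element whose additive order divides $d$ lies in $C_d$. In particular $X\subseteq C_d$, and therefore
\[
\langle X\rangle_{+}\;\subseteq\;C_d.
\]
But by the definition of a transitive cycle base, $X$ generates the additive group, so $\langle X\rangle_{+}=(A,+)$. Comparing cardinalities forces $d=n$, whence $g$ has additive order $n$ and hence generates $(A,+)$.

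I do not expect any genuine obstacle here; the argument is essentially a one-line consequence of the fact that additive automorphisms preserve additive order together with the structure of finite cyclic groups. The only small subtlety worth a sentence is remarking explicitly that although the transitive cycle base hypothesis is a statement about the combined $+/\circ$ structure, we only need it on the additive side, where cyclicity immediately trivialises the problem.
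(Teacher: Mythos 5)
Your argument is correct and is essentially the paper's own proof: both use that $\lambda_a\in\Aut(A,+)$ preserves additive order, so every element of the orbit $X$ has the same order $d$ as $g$, and then that $X$ additively generates the cyclic group $(A,+)$ to force $d=|A|$. The only cosmetic difference is that you package the final step via the unique subgroup $C_d$ of order $d$, whereas the paper notes directly that everything in $\langle X\rangle_+=A$ would have order dividing $d$, contradicting cyclicity; these are interchangeable.
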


\begin{proof}
If $g$ has order $n<|A|$, since the maps $\lambda_x$ preserve the order in the additive group, it follows that all the elements of $X$ have additive order equal to $n$ and therefore all the elements of $A$ have additive order that divides $n$, a contradiction.
\end{proof}

\begin{prop}\label{risu3}
Let $A$ be a finite cyclic left brace and $(A,\bullet)$ an uniconnected associated cycle set. Moreover, let $g\in A$ be an element of a transitive cycle base such that $a\bullet b=\lambda_a(g)^{-}\circ b$ for all $a,b\in A$ and $H$ the subgroup of $(A,\circ)$ such that $\Ret(A,\bullet)=A/H $. Then $H$ is equal to $Soc(A)$.
\end{prop}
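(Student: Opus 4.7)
The plan is to combine \cref{risu2} with \cref{gentrans} to obtain the equality $H = Soc(A)$ by proving the two inclusions separately. One direction is already contained in \cref{risu2}, which gives $Soc(A) \subseteq H$ for every left brace admitting a transitive cycle base, so the remaining task is to establish $H \subseteq Soc(A)$, and this is where the cyclic hypothesis on the additive group will play the decisive role.

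To prove the nontrivial inclusion, I would start with an arbitrary $h \in H$, i.e. an element satisfying $\lambda_h(g) = g$. The goal is to show $\lambda_h = \id_A$, since then $h \in Soc(A)$ by the characterization recalled right after the definition of the socle. Here I would use \cref{gentrans}: since $A$ is a finite cyclic left brace and $g$ is an element of a transitive cycle base, $g$ generates the additive group $(A,+)$. Therefore every $a \in A$ can be written in the form $a = n\cdot g$ (with respect to the additive operation) for some integer $n$.

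At this point I would invoke \cref{action}(1), which guarantees that $\lambda_h$ is a group automorphism of $(A,+)$. Combining this with the fact that $\lambda_h$ fixes $g$, one gets
\[
\lambda_h(a) = \lambda_h(n\cdot g) = n\cdot \lambda_h(g) = n\cdot g = a
\]
for every $a \in A$. Hence $\lambda_h = \id_A$, which by the definition of the socle means $h \in Soc(A)$, establishing $H \subseteq Soc(A)$ and concluding the proof.

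There is no real obstacle here: the statement is a direct consequence of the cyclicity of $(A,+)$, which forces any additive automorphism fixing a generator to be trivial. The only subtlety is to make sure that the element $g$ provided by the construction is genuinely a generator of the additive group, and this is precisely the content of \cref{gentrans}, which is why that preliminary lemma is needed before stating this proposition.
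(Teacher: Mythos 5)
Your proof is correct and follows exactly the same route as the paper's: $Soc(A)\subseteq H$ from \cref{risu2}, and the reverse inclusion by noting via \cref{gentrans} that $g$ additively generates $A$, so an additive automorphism $\lambda_h$ fixing $g$ must be the identity. You simply spell out the generator argument that the paper leaves implicit.
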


\begin{proof}
The inclusion $Soc(A)\subseteq H$ follows by \cref{risu2}. By \cref{gentrans}, if $h$ is an element of $H$ the equality $\lambda_h(g)=g $ implies $\lambda_h=id_A$, therefore $H\subseteq Soc(A)$.
\end{proof}

\begin{prop}\label{risu4}
Let $A$ be a finite left brace. Suppose that $A$ has a transitive cycle base and let  $(A,\bullet)$ be an uniconnected associated cycle set. Moreover, let $g\in A$ be an element of a transitive cycle base $X$ such that $a\bullet b=\lambda_a(g)^{-}\circ b$ for all $a,b\in A$ and let $H$ be the subgroup of $(A,\circ)$ such that $\Ret(A,\bullet)=A/H $. If $H $ is a normal subgroup of $(A,\circ)$, the subgroup $H$ is equal to $Soc(A)$.
\end{prop}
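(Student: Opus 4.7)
The plan is to prove the nontrivial inclusion $H\subseteq\mathrm{Soc}(A)$; the reverse inclusion $\mathrm{Soc}(A)\subseteq H$ is already part of \cref{risu2}. Pick an arbitrary $h\in H$; the goal is to show $\lambda_h=\id_A$.

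First I would exploit the normality hypothesis in a conjugation argument. For any $a\in A$, the element $a^{-}\circ h\circ a$ lies in $H$ because $H\trianglelefteq (A,\circ)$, so by definition of $H$ we have $\lambda_{a^{-}\circ h\circ a}(g)=g$. Since $\lambda:(A,\circ)\to\mathrm{Aut}(A,+)$ is a group homomorphism by \cref{action}, this equality unfolds as $\lambda_{a^{-}}\lambda_h\lambda_a(g)=g$, equivalently $\lambda_h(\lambda_a(g))=\lambda_a(g)$. As $a$ ranges over $A$, the element $\lambda_a(g)$ ranges over the whole transitive cycle base $X$ (it is a single $\lambda$-orbit through $g$), so $\lambda_h$ fixes every element of $X$.

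Next I would upgrade this pointwise fixing on $X$ to $\lambda_h=\id_A$. By \cref{action}, $\lambda_h\in\mathrm{Aut}(A,+)$; by the very definition of transitive cycle base recalled just before \cref{rump20}, $X$ generates the additive group $(A,+)$. An additive automorphism fixing a generating set of the additive group is the identity, hence $\lambda_h=\id_A$, i.e. $h\in\mathrm{Soc}(A)$. Combined with \cref{risu2} this yields $H=\mathrm{Soc}(A)$.

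I do not anticipate a real obstacle here: the one place the hypothesis is actually used is in the conjugation step $a^{-}\circ h\circ a\in H$, and without normality that step is exactly what could fail. The only care needed is to not confuse the two operations: the argument makes crucial use of the fact that $\lambda$ is a \emph{multiplicative} homomorphism while simultaneously the generation condition on $X$ is with respect to the \emph{additive} structure, and both sides are bridged by $\lambda_h\in\mathrm{Aut}(A,+)$.
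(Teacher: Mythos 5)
Your proof is correct and follows essentially the same route as the paper: the paper invokes the fact that stabilizers of points in a single $\lambda$-orbit are conjugate, so a normal stabilizer $H$ fixes all of $X$, and then concludes from $X$ additively generating $A$ — your conjugation computation is exactly the unpacking of that argument. No gaps; the only difference is that you spell out the conjugacy-of-stabilizers step explicitly.
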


\begin{proof}
The inclusion $Soc(A)\subseteq H$ follows by \cref{risu2}. Now, all the elements of $X$ are in the same orbit under the action $\lambda$, hence their stabilizers are conjugated in $(A,\circ)$. Since the stabilizer of $g$, which is equal to $H$, is a normal subgroup of $(A,\circ)$, it follows that $H$ fixes all the elements of $X$. The thesis follows since the additive group generated by $X$ is $A$.
\end{proof}

\noindent The previous propositions are useful to understand, in some cases, the relation between the retraction-process of the decomposable associated cycle set $(A,\cdotp)$ and an uniconnected associated cycle set $(A,\bullet)$. 

\begin{theor}\label{retrcyc}
Let $A$ be a finite cyclic left brace, $(A,\cdotp)$ the decomposable associated cycle set and $(A,\bullet)$ an uniconnected associated cycle set. Moreover, let $g\in A$ an element of a transitive cycle base such that $a\bullet b=\lambda_a(g)^{-}\circ b$ for all $a,b\in A$. Then, $(A,\cdotp)$ and $(A,\bullet)$  are multipermutation cycle sets and $mpl(A,\cdotp)=mpl(A,\bullet)$. Moreover, the underlying sets of $\Ret^n(A,\cdotp)$ and $\Ret^n(A,\bullet)$ coincide, for every $n\in \mathbb{N}$.
\end{theor}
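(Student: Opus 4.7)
My plan is to argue by induction on the retraction level, at each step identifying the quotient as an uniconnected associated cycle set of a smaller cyclic left brace, and then invoking \cref{propzoc,risu3} to match it against the corresponding decomposable retraction.

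For the base case, \cref{risu3} shows that the stabilizer $H$ associated to $(A,\bullet)$ equals $\Soc(A)$, so as a set $\Ret(A,\bullet)=A/\Soc(A)$. On the other side, \cref{propzoc} gives that $\Ret(A,\cdot)$ coincides, as a cycle set, with the decomposable associated cycle set of $A/\Soc(A)$; in particular its underlying set is also $A/\Soc(A)$. This settles $n=1$.

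Next I would show that the cycle set structure on $\Ret(A,\bullet)$ is precisely the uniconnected associated cycle set of the quotient cyclic brace $A/\Soc(A)$ built from the image $\bar{g}$ of $g$. Since $\Soc(A)$ is a brace ideal, $A/\Soc(A)$ inherits a left brace structure whose additive group is cyclic (being a quotient of a cyclic group), and the homomorphism $\lambda$ descends to the quotient. Because the $\lambda$-orbit $X$ of $g$ is a transitive cycle base of $A$, it generates $(A,+)$; its image $\overline{X}$, which lies in the $\lambda$-orbit of $\bar{g}$, therefore generates $(A/\Soc(A),+)$, making the orbit of $\bar{g}$ a transitive cycle base. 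The defining formula descends directly:
\[
\overline{a\bullet b}=\overline{\lambda_{a}(g)^{-}\circ b}=\lambda_{\bar{a}}(\bar{g})^{-}\circ \bar{b},
\]
so $\Ret(A,\bullet)$ is exactly the uniconnected associated cycle set of $A/\Soc(A)$ attached to $\bar{g}$. The main technical checkpoint here is verifying that the equivalence classes defining $\Ret(A,\bullet)$ genuinely coincide (as sets of elements of $A$) with the cosets of $\Soc(A)$, which is exactly what \cref{risu3} guarantees in the cyclic case.

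With this structural fact in hand, the induction is immediate: $A/\Soc(A)$ is again a finite cyclic left brace, and its decomposable associated cycle set is $\Ret(A,\cdot)$ by \cref{propzoc}. Applying the hypothesis to $A/\Soc(A)$ with distinguished element $\bar{g}$ yields the coincidence of underlying sets at every level simultaneously for $\Ret^{n}(A,\cdot)$ and $\Ret^{n}(A,\bullet)$. Finally, \cref{esecicmpl} combined with the structure theorem for finite cyclic braces ensures that $(A,\cdot)$ has finite multipermutation level; since the underlying sets agree at every step and both series stabilize exactly when the remaining brace is trivial, the two multipermutation levels are equal. The main obstacle throughout is Step 2: confirming that the retraction of the uniconnected cycle set is not just set-theoretically but structurally the uniconnected cycle set of the quotient brace, which is what allows the induction to close.
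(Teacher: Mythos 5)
Your argument follows essentially the same route as the paper's proof: identify $H=Soc(A)$ via \cref{risu3}, match underlying sets through \cref{propzoc}, recognise $\Ret(A,\bullet)$ as the uniconnected associated cycle set of the quotient brace $A/Soc(A)$ attached to $\bar g$, and close by induction. The one point where you diverge is the termination of the induction: the paper gets $H\neq\{0\}$ from the retractability of cycle sets with regular permutation group (\cite[Corollary 3.3]{RaVe21}), whereas you appeal to \cref{esecicmpl} together with the structure theory of cyclic left braces; this is legitimate, but \cref{esecicmpl} as stated concerns odd prime powers, so for even-order cyclic braces you should instead invoke Rump's result that every finite cyclic left brace has non-trivial socle (or the paper's device via \cite{RaVe21}).
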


\begin{proof}
Since $(A,\bullet)$ has regular permutation group, by \cite[Corollary 3.3]{RaVe21} it is retractable, hence the subgroup $H$ of $(A,\circ)$ such that $\Ret(A,\bullet)=A/H$ is not trivial. Moreover, by \cref{risu3} and \cref{propzoc}, $H$ is equal to $Soc(A)$ and $\Ret(A,\cdotp)$ and $\Ret(A,\bullet)$ have as underlying set the quotient $A/Soc(A)$. By standard calculation, if we indicate by $(A/Soc(A),\bullet)$ the uniconnected cycle set associated to the left brace $A/Soc(A)$ given by 
$$(a+Soc(A))\bullet (b+Soc(A)):=\lambda_{a+Soc(A)}(g+Soc(A))^{-}\circ (b+Soc(A)),$$
it follows that $\Ret(A,\bullet)=(A/Soc(A),\bullet)$. Moreover, the quotient left brace $A/Soc(A)$ is again a cyclic left brace. Therefore, the thesis follows by an easy induction on $|A|$.
\end{proof}

\noindent In \cite{rump2019classification} Rump used the classification of cyclic left braces to show that they have not trivial socle. Here, we use the previous theorem to show this result in a different way. 

\begin{cor}[pg. 319, \cite{rump2019classification}]
Let $A$ be a finite cyclic left brace. Then, the socle of $A$ is not trivial.
\end{cor}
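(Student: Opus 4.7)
The plan is to deduce the corollary directly from the machinery already assembled, using an uniconnected associated cycle set as a bridge between the left brace and its socle.

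First I would observe that, since $A$ is a finite cyclic left brace, the orbit of any additive generator under the action $\lambda$ is a transitive cycle base of $A$ (as recalled just before \cref{gentrans}). Pick such a transitive cycle base and a base element $g$, and let $(A,\bullet)$ be the corresponding uniconnected associated cycle set provided by \cref{rump20}, so $a\bullet b=\lambda_a(g)^{-}\circ b$ and $\mathcal{G}(A,\bullet)\cong(A,\circ)$. By \cref{risu2}, the retraction $\Ret(A,\bullet)$ is the coset space $A/H$, where $H=\{h\in A\mid \lambda_h(g)=g\}$, and by \cref{risu3} one has $H=Soc(A)$.

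Next, I would invoke the retractability input already used in the proof of \cref{retrcyc}: because $(A,\bullet)$ is uniconnected, its permutation group acts regularly, so \cite[Corollary 3.3]{RaVe21} forces $(A,\bullet)$ to be retractable. In particular $\Ret(A,\bullet)$ has strictly smaller cardinality than $A$, so the subgroup $H$ is non-trivial. Combining this with $H=Soc(A)$ yields $Soc(A)\neq\{0\}$, which is the statement of the corollary.

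The argument is essentially a repackaging of the first lines of the proof of \cref{retrcyc}, so there is no serious obstacle; the only thing to be careful about is that the existence of a transitive cycle base and the applicability of \cite[Corollary 3.3]{RaVe21} both genuinely require the hypothesis that $A$ is cyclic and finite, so one should note these cleanly. No induction on $|A|$ is needed here, since we only want non-triviality of $Soc(A)$ rather than information about iterated retractions.
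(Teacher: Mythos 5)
Your argument is correct and is essentially the paper's own: the paper deduces the corollary from \cref{retrcyc} together with \cref{propzoc}, and the first two sentences of the proof of \cref{retrcyc} are exactly the steps you spell out (regularity of $\mathcal{G}(A,\bullet)$ forces retractability via \cite[Corollary 3.3]{RaVe21}, and \cref{risu3} identifies the resulting non-trivial subgroup $H$ with $Soc(A)$). The only cosmetic difference is that you work directly with the uniconnected cycle set and \cref{risu3}, whereas the paper routes the conclusion through the decomposable cycle set and \cref{propzoc}; both rest on the same inputs and your observation that no induction is needed is accurate.
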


\begin{proof}
By the previos theorem, the decomposable associated cycle set is retractable hence the thesis follows by \cref{propzoc}.
\end{proof}

In the following, we consider left braces $A$ whose multiplicative group is a so-called \emph{Dedekind group}, i.e., a group in which every subgroup is normal (see \cite[pg. 143]{robinson2012course}). 

\begin{theor}\label{teocons}
Let $A$ be a finite left brace such that $(A,\circ)$ is a Dedekind group and $(A,\cdotp)$ the decomposable associated cycle set. Suppose that $A$ has a transitive cycle base and let $(A,\bullet)$ be an uniconnected associated cycle set. Moreover, let $g\in A$ be an element of a transitive cycle base such that $a\bullet b=\lambda_a(g)^{-}\circ b$ for all $a,b\in A$. Then, $(A,\bullet)$ and $(A,\cdotp)$ are multipermutation cycle sets and $mpl(A,\cdotp)=mpl(A,\bullet)$. Moreover, the underlying sets of $\Ret^n(A,\cdotp)$ and $\Ret^n(A,\bullet)$ coincide, for every $n\in \mathbb{N}$.
\end{theor}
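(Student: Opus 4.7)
The plan is to follow the same strategy as in the proof of \cref{retrcyc}, replacing the appeal to \cref{risu3} (which relied on cyclicity of the additive group) with an appeal to \cref{risu4} (which requires normality of the stabilizer subgroup). Since $(A,\circ)$ is Dedekind, every subgroup is normal, and in particular the subgroup $H=\{h\in A\mid \lambda_h(g)=g\}$ appearing in \cref{risu2} is normal in $(A,\circ)$. Therefore \cref{risu4} applies and yields $H=Soc(A)$.

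By \cite[Corollary 3.3]{RaVe21}, the regularity of the permutation group of $(A,\bullet)$ forces retractability, so $H=Soc(A)$ is non-trivial and $\Ret(A,\bullet)$ has underlying set $A/Soc(A)$. Combining this with \cref{propzoc}, the retraction $\Ret(A,\cdotp)$ has the same underlying set $A/Soc(A)$ and coincides with the decomposable associated cycle set of the quotient left brace $A/Soc(A)$. A direct computation then identifies $\Ret(A,\bullet)$ with the uniconnected associated cycle set of $A/Soc(A)$ built from the base point $g+Soc(A)$, namely
\[(a+Soc(A))\bullet(b+Soc(A)):=\lambda_{a+Soc(A)}(g+Soc(A))^{-}\circ(b+Soc(A)),\]
so that both retractions have the same underlying set at the first level.

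To close the argument by induction on $|A|$, I need to verify that the quotient brace $A/Soc(A)$ again satisfies the hypotheses of the theorem. Its multiplicative group is a quotient of a Dedekind group and hence still Dedekind, since every subgroup of $(A,\circ)/Soc(A)$ lifts to a subgroup of $(A,\circ)$ containing $Soc(A)$ and is therefore automatically normal. A transitive cycle base of the quotient is provided by the image of $X$ under the canonical projection: the $\lambda$-action descends to $A/Soc(A)$, and $\{x+Soc(A)\mid x\in X\}$ is still a single $\lambda$-orbit that generates the additive group $(A/Soc(A),+)$. The base case $|A|=1$ is trivial, and the equality $\mathrm{mpl}(A,\cdotp)=\mathrm{mpl}(A,\bullet)$ follows at once from the coincidence of the underlying sets at every stage.

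The main obstacle is essentially notational: one must check that the formula $\lambda_a(g)^{-}\circ b$ descends to a well-defined operation modulo $Soc(A)$ and coincides there with the retract of $\bullet$. This is immediate from the fact that $Soc(A)$ is an ideal, so both $\lambda$ and $\circ$ pass to the quotient brace; nevertheless it is the one step that has to be carried out carefully in order to match $\Ret(A,\bullet)$ with the uniconnected cycle set of $A/Soc(A)$ based at $g+Soc(A)$. Everything else is a clean reduction of the Dedekind case to the situation already handled by the stabilizer-normality criterion in \cref{risu4}.
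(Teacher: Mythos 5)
Your proposal is correct and follows essentially the same route as the paper: invoke \cite[Corollary 3.3]{RaVe21} for retractability, use the Dedekind hypothesis to make \cref{risu4} applicable so that $H=Soc(A)$, identify both first retractions with the corresponding cycle sets on $A/Soc(A)$ via \cref{propzoc}, check that the quotient brace is again Dedekind with a transitive cycle base given by the image of $X$, and conclude by induction on $|A|$. No substantive differences.
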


\begin{proof}
Since $(A,\bullet)$ has regular permutation group, by \cite[Corollary 3.3]{RaVe21} it is retractable, hence the subgroup $H$ of $(A,\circ)$ such that $\Ret(A,\bullet)=A/H$ is not trivial. By \cref{risu4} and \cref{propzoc}, $H$ is equal to $Soc(A)$ and $\Ret(A,\cdotp)$ and $\Ret(A,\bullet)$ have as underlying set the quotient $A/Soc(A)$. Similarly to the previous corollary,  if we indicate by $(A/Soc(A),\bullet)$ the uniconnected cycle set associated to the left brace $A/Soc(A)$ given by 
$$(a+Soc(A))\bullet (b+Soc(A)):=\lambda_{a+Soc(A)}(g+Soc(A))^{-}\circ (b+Soc(A)),$$
it follows that $\Ret(A,\bullet)=(A/Soc(A),\bullet)$. Moreover, $\{\lambda_x(g)+Soc(A)\ | \ x\in A \}$ is a transitive cycle base of $A/Soc(A)$ and $(A/Soc(A),\circ)$ is again a Dedekind group. Therefore, the thesis follows by an easy induction on $|A|$.
\end{proof}

\begin{rem}
\begin{itemize}
    \item[1)] In \cite[Question 3.11]{jedlivcka2021cocyclic}, the authors noted that, given a cyclic left brace $A$ having prime-power size and abelian multiplicative group, the multipermutation level of the decomposable associated cycle set coincides with the one of an uniconnected associated cycle set and they asked if this is a coincidence or there is a more general mechanism hidden behind. Theorems \ref{retrcyc} and \ref{teocons} show that this is not a mere coincidence.
    \item[2)]  Theorems \ref{retrcyc} and \ref{teocons} also shows that the retraction of the cycle sets belonging to these classes can be simply obtained considering the quotient module the socle by the associated left brace. By an iteration-process, this idea allow to obtain the $m^{th}$-retraction, for all $m\in \mathbb{N}$. In this way, one can obtain \cite[Theorem 3.9]{jedlivcka2021cocyclic} as a corollary.
\end{itemize}
\end{rem} 

We close the section extending the result on retractability obtained in \cite[Proposition 2]{capiru2020} for indecomposable cycle sets with abelian permutation group.

\begin{theor}
Let $X$ be an uniconnected cycle set and suppose that the permutation group  $\mathcal{G}(X)$ is a Dedekind group. Then, $X$ is a multipermutation cycle set.
\end{theor}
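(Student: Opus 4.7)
The plan is to reduce the theorem to \cref{teocons} via the structural description of uniconnected cycle sets provided by \cref{rump20}. By that proposition, there exists a finite left brace $A$ admitting a transitive cycle base, together with an element $g$ of such a cycle base, so that $X$ is isomorphic (as a cycle set) to the uniconnected associated cycle set $(A,\bullet)$ defined by $a\bullet b:=\lambda_a(g)^{-}\circ b$. Moreover, the same proposition ensures that $\mathcal{G}(A)\cong (A,\circ)$, and since the underlying permutation groups of isomorphic cycle sets coincide up to isomorphism, we have $(A,\circ)\cong \mathcal{G}(X)$. Hence $(A,\circ)$ inherits the Dedekind property from $\mathcal{G}(X)$.

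Now all the hypotheses of \cref{teocons} are met: $A$ is a finite left brace, $(A,\circ)$ is a Dedekind group, $A$ admits a transitive cycle base, and $(A,\bullet)$ is an uniconnected associated cycle set built from an element $g$ in such a cycle base. Applying \cref{teocons}, we conclude that $(A,\bullet)$ is a multipermutation cycle set. Since the multipermutation level is manifestly an isomorphism invariant and $X\cong (A,\bullet)$, it follows that $X$ is a multipermutation cycle set, as required.

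I do not expect any real obstacle in this argument, since the combinatorial heart of the statement has already been extracted in \cref{risu4} and packaged in \cref{teocons}; the only thing to check is the bookkeeping that translates the hypothesis on $\mathcal{G}(X)$ into the corresponding hypothesis on $(A,\circ)$, which follows at once from Rump's description in \cref{rump20}.
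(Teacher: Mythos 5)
Your argument is correct and coincides with the paper's own proof: both pass through \cref{rump20} to realize $X$ as an uniconnected associated cycle set of a left brace $A$ with $(A,\circ)\cong\mathcal{G}(X)$, and then invoke \cref{teocons}. Your write-up merely spells out the bookkeeping (transitive cycle base, isomorphism invariance of the multipermutation level) that the paper leaves implicit.
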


\begin{proof}
Let $A$ be the left brace constructed as in \cref{rump20} such that an indecomposable associated cycle set $(A,\bullet)$ is isomorphic to $X$. Then, the group $(A,\circ)$ is isomorphic to $\mathcal{G}(X)$, hence the thesis follows by \cref{teocons} applied to the left brace $A$.
\end{proof}

\noindent The following example, which closes the section, ensures the existence of an uniconnected cycle set having permutation group isomorphic to a non-abelian Dedekind group.

\begin{ex}
Let $(A,+,\circ)$ the left brace of size $8$ given by $(A,+):=(\mathbb{Z}/8\mathbb{Z},+)$ and $(A,\circ)$ the group structure on $\mathbb{Z}/8\mathbb{Z}$ given by 
$$a_1\circ a_2:=a_1 + 3^{a_1}a_2 $$
for all $a_1,a_2\in A$. Then, by \cite[Theorem 3.1]{bachiller2015classification} $(A,+,\circ)$ is a left brace such that $(A,\circ)$ is isomorphic to the quaternion group of size $8$. Since $(A,+)$ is cyclic, the left brace has a transitive cycle base. Therefore, if $(A,\bullet)$ is an uniconnected associated cycle set, by \cref{rump20} we have that $\mathcal{G}(A)$ is isomorphic to $(A,\circ)$.
\end{ex}

\section{The cyclic left braces with odd size and a Z-group permutation group}

This small section, which does not contains new "genuine" results, aims to show that cyclic left braces with odd size and a Z-group permutation group can be constructed as a particular semidirect product of left braces. The results contained in this section are implicitly contained in \cite{rump2019classification}, but we exhibit them in a comfortable form for our scopes.

\smallskip 

\noindent We start recalling that in \cite{rump2019classification} Rump showed that a cyclic left brace $A$ is isomorphic to an iterated semidirect product of cyclic left braces $(...(B_m\rtimes B_{m-1})...)\rtimes B_2)\rtimes B_1 $, where the left braces $B_i$ are constructed on a Sylow basis of the multiplicative group $(A,\circ)$. Given the left braces $B_1,..., B_m$ having respectively size $p_1^{\alpha_1},...,p_m^{\alpha_m}$ with $p_i<p_{i+1}$, the whole cyclic left brace $A$ is completely determined by the action of $B_i$ on $B_j$ (see \cite[Sections 3-4]{rump2019classification}), which we denote by $\alpha_{i,j}$, for all $i,j\in \{1,...,m\}$ with $j>i$. By \cite[Proposition 5]{rump2019classification} we have that if $B_j$ is not trivial as left brace, then the action $\alpha_{i,j}$ must be trivial for all $i\in \{1,...,m\}$ with $j>i$. Moreover, in \cite[Proposition 14]{rump2019classification} it was shown that the socle of $(...(B_m\rtimes B_{m-1})...)\rtimes B_2)\rtimes B_1 $ is the ideal $S_m\times .... \times S_1 $ where $S_i= Soc(B_i)\cap C_{B_i}(B_{i+1}\times ... \times B_{m},\circ)$ and $C_{B_i}(B_{i+1}\times ... \times B_{m},\circ) $ is the set of the elements of $(B_i,\circ)$ which commutes with the subgroup $(B_{i+1}\times ... \times B_{m},\circ) $, for all $i\in \{1,...,m \}$. The following proposition is useful to construct a family of cyclic left braces. 

\begin{prop}\label{costrtutt}
Let $r,m,v\in \mathbb{N}\cup \{0\}$ such that $m\leq r$. Moreover, let $\alpha_1,...,\alpha_r,\gamma_1,...,\gamma_v$ be natural numbers and $P:=\{p_1,...,p_r,q_1,...,q_v \}$ a set of distinct odd prime numbers such that $p_i< p_{i+1}$, for all $i=1,...,r-1$.\\
Suppose that $A_1,..., A_v$ are cyclic left braces and that $|A_i|=q_i^{\gamma_i}$ for all $i=1,...,v$. Let $B_1,...,B_r$ be cyclic left braces of size respectively $p_1^{\alpha_1},..., p_r^{\alpha_r} $ such that $B_i$ has multipermutation level $1$ for all $i=m+1,...,r$ and suppose that $\alpha$ is a homomorphism from $(B_1\times ... \times B_m,\circ)$ to $Aut(B_{m+1}\times ... \times B_{r},+)$ such that $\alpha_{|_{B_i}}$ is not trivial for all $i\in \{1,...,m\}$ and $|\alpha(B_1\times ... \times B_m)\cap Aut(B_i)|>1$, for all $i\in \{m+1,...,r \}$.\\
Denote by $\bar{A}$ the direct product of left braces $(A_1\times ... \times A_v) $ and by $\bar{B}$ the semidirect product of groups $ (B_{m+1} \times ... \times B_v) \rtimes_{\alpha}  (B_1\times ... \times B_m) $.\\
Then, $\alpha$ makes the structure $\bar{A} \times \bar{B}$ onto a cyclic left brace. 
\end{prop}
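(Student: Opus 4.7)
The plan is to build $\bar{A}\times\bar{B}$ in two independent stages and then combine them by direct product, exploiting the fact that the prime sets $\{q_1,\dots,q_v\}$ and $\{p_1,\dots,p_r\}$ are disjoint. First, $\bar{A}$ requires no extra work: since each $A_i$ is a cyclic left brace of order $q_i^{\gamma_i}$ and the $q_i$ are pairwise distinct primes, the direct product of left braces $\bar{A}=A_1\times\cdots\times A_v$ is a left brace whose additive group is a direct product of cyclic groups of pairwise coprime orders, hence cyclic of order $\prod_i q_i^{\gamma_i}$.

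The main step is to check that $\bar{B}$ is a well-defined left brace. For every $j\in\{m+1,\dots,r\}$ the hypothesis that $B_j$ has multipermutation level $1$ forces $\lambda_a=\id$ on $B_j$ for all $a\in B_j$, hence $B_j=Soc(B_j)$; equivalently, $+$ and $\circ$ coincide on $B_j$, so $B_j$ is a trivial left brace. Writing $C:=B_{m+1}\times\cdots\times B_r$, this gives $+=\circ$ on $C$ and therefore $\Aut(C,+)=\Aut(C,+,\circ)$. Consequently, the given homomorphism $\alpha\colon(B_1\times\cdots\times B_m,\circ)\to\Aut(C,+)$ is automatically a homomorphism into the left-brace automorphism group of $C$, which is exactly the datum needed to form the semidirect product of left braces $\bar{B}:=C\rtimes_\alpha(B_1\times\cdots\times B_m)$ in the sense recalled in Section~2. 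The brace axiom on $\bar{B}$ then follows from the axioms of the factor left braces together with the semidirect product construction, and the additive group $(\bar{B},+)=(B_1,+)\times\cdots\times(B_r,+)$, being a direct product of cyclic groups of coprime prime-power orders, is cyclic.

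Finally, the direct product of left braces $\bar{A}\times\bar{B}$ inherits a left brace structure, and its additive group is the direct product of two cyclic groups whose orders involve disjoint sets of primes, hence cyclic of order $|\bar{A}|\cdot|\bar{B}|$; this yields the claim. The non-triviality hypotheses on $\alpha$ play no role in this structural verification and will only be needed later to guarantee that the construction is not already captured by a smaller semidirect decomposition. The main obstacle I expect is the passage from ``$\alpha$ takes values in $\Aut(C,+)$'' to ``$\alpha$ takes values in $\Aut(C,+,\circ)$'', which is what makes the semidirect product of left braces well-defined; this passage is resolved precisely by the multipermutation level $1$ assumption on $B_{m+1},\dots,B_r$, without which an additive-automorphism action could not in general be lifted to a brace automorphism.
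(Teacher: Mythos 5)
Your proof is correct and follows essentially the same route as the paper's (one-line) argument: the multipermutation level $1$ hypothesis makes $B_{m+1}\times\cdots\times B_r$ a trivial left brace, so $\Aut(B_{m+1}\times\cdots\times B_r,+)=\Aut(B_{m+1}\times\cdots\times B_r,+,\circ)$ and $\alpha$ is the datum needed for a semidirect product of left braces, whose additive group is cyclic by coprimality of the orders. You simply spell out the details that the paper leaves implicit, and you correctly observe that the non-triviality conditions on $\alpha$ are not needed for this direction.
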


\begin{proof}
Since $B_{m+1}\times ... \times B_{v} $ is a trivial left brace, $\alpha$ is a homomorphism from  $(B_1\times ... \times B_m,\circ)$ to $Aut(B_{m+1}\times ... \times B_{r},+,\circ)$, therefore it makes $A:=\bar{A} \times \bar{B}$ onto a left brace structure with cyclic additive group. 
\end{proof}

\noindent As a weaker converse of \cref{costrtutt}, we provide the following result.

\begin{prop}\label{cosZgroup}
Let $(A,+,\circ)$ be a cyclic left brace having odd size and such that $(A,\circ)$ is a Z-group. Then, $A$ can be constructed as in \cref{costrtutt}.
\end{prop}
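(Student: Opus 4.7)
The strategy is to apply Rump's iterated semidirect product decomposition of cyclic left braces from \cite[Sections 3-4]{rump2019classification}, writing
\[
A \cong (\ldots(C_n \rtimes C_{n-1})\ldots) \rtimes C_1,
\]
where each $C_i$ is the cyclic left brace carried by the Sylow $p_i$-subgroup of $(A,\circ)$, the primes $p_1<\cdots<p_n$ are odd, and the whole structure is encoded by the actions $\alpha_{i,j}\colon (C_i,\circ)\to \Aut(C_j,+,\circ)$ for $i<j$. The Z-group hypothesis ensures that every Sylow subgroup of $(A,\circ)$ is cyclic, so each $C_i$ is indeed a cyclic left brace and the cited decomposition applies directly.

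Next I would classify each index $i\in\{1,\ldots,n\}$ as \emph{isolated} if $\alpha_{i,j}=\id$ for every $j>i$ and $\alpha_{k,i}=\id$ for every $k<i$, as \emph{active} if some $\alpha_{i,j}$ with $j>i$ is non-trivial, and as \emph{passive} if some $\alpha_{k,i}$ with $k<i$ is non-trivial. To see that active and passive are disjoint, I would exploit the fact that a finite Z-group of odd order is metacyclic, $(A,\circ)\cong \langle a,b \mid a^M=b^N=1,\ bab^{-1}=a^r\rangle$ with $\gcd(M,N)=1$: every Sylow subgroup of $(A,\circ)$ lies entirely in the normal cyclic kernel $\langle a\rangle$ or entirely in the cyclic complement $\langle b\rangle$, and only the complement acts by conjugation non-trivially on the kernel. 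Combined with \cite[Proposition 5]{rump2019classification}, this also guarantees that every passive index carries a trivial left brace, matching the requirement on $B_{m+1},\ldots,B_r$ in \cref{costrtutt}.

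I would then collect the isolated Sylow components as the $A_1,\ldots,A_v$ factors of $\bar A$, with primes $q_1<\cdots<q_v$, and gather the interacting ones as $B_1,\ldots,B_r$ ordered by increasing prime in such a way that the active indices occupy the initial $m$ positions and the passive ones the remaining $r-m$. The individual actions $\alpha_{i,j}$ for $i\leq m < j$ then assemble into a single homomorphism $\alpha\colon (B_1\times\cdots\times B_m,\circ)\to \Aut(B_{m+1}\times\cdots\times B_r,+)$ by taking the product of the componentwise maps; by construction $\alpha|_{B_i}$ is non-trivial for each $i\leq m$ (since $i$ is active) and $|\alpha(B_1\times\cdots\times B_m)\cap \Aut(B_i)|>1$ for each $i>m$ (since $i$ is passive), so both non-triviality conditions of \cref{costrtutt} are satisfied and the resulting $\bar A\times \bar B$ recovers $A$.

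The main obstacle I anticipate is verifying the global ordering step: although a non-trivial action $\alpha_{i,j}$ forces $p_i\mid p_j-1$, and hence $p_i<p_j$ for any acting pair, extending this to an overall labelling that places \emph{all} active primes before \emph{all} passive primes in the sequence $p_1<\cdots<p_r$ requires combining this arithmetic fact with the metacyclic splitting of $(A,\circ)$ and Rump's finer structural information about which cyclic left braces actually arise. Once this ordering is secured, the remaining verification that $\bar A\times \bar B$ carries exactly the original brace structure of $A$ is routine bookkeeping on the iterated semidirect product.
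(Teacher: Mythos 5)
Your proposal follows essentially the same route as the paper: Hall's metacyclic structure of odd-order Z-groups, Rump's Sylow-basis decomposition of cyclic left braces, and the fact that any nontrivially acted-upon Sylow factor must carry a trivial brace, with your isolated/active/passive trichotomy corresponding exactly to the paper's split of the Sylow basis into the factors of $\bar{A}$ and of $\bar{B}$. The global ordering you flag as the main obstacle is not resolved any more strongly in the paper either: its proof only asserts (and only uses) $p_i<p_j$ for those pairs where $B_i$ acts nontrivially on $B_j$, which follows from $p_i\mid p_j-1$, so the ordering condition of \cref{costrtutt} should be read in that weaker pairwise sense and your argument then goes through.
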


\begin{proof}
By \cite[Chapter 9]{hall2018theory}, $(A,\circ)$ is isomorphic to a semidirect product of two cyclic groups $C_1$ and $C_2$ having coprime order. Let $r,m,\alpha_1,...,\alpha_r\in \mathbb{N}$, $p_1,...,p_r$ distinct prime numbers such that $m\leq r$, $C_2\cong \mathbb{Z}/p_1^{\alpha_1}\mathbb{Z}\times ... \times \mathbb{Z}/p_m^{\alpha_m}\mathbb{Z}$ and $C_1\cong \mathbb{Z}/p_{m+1}^{\alpha_{m+1}}\mathbb{Z}\times ... \times \mathbb{Z}/p_r^{\alpha_r}\mathbb{Z}$. Then, $(A,\circ)$ is isomorphic to $ C_1\rtimes_{\beta} C_2$ for some homomorphism $\beta$ from $C_2$ to $Aut(C_1)$. If $B_{m+1},...,B_r$ are the Sylow subgroups of $C_1$ and $B_1,...,B_m$ are the ones of $C_2$, the set $\{B_1,...,B_r\}$ can be identified with a Sylow basis of $A$, hence by \cite[Sections 3-4]{rump2019classification} the whole left brace is determined by $\beta$ and we can suppose that $p_i< p_{j}$ for all $i\in \{1,...,m\}$ and $j\in \{m+1,...,r \}$ whenever the induced action of $B_i$ on $B_j$ is not trivial. Let $S$ be the subset of $\{B_1,...,B_r\}$ given by the elements $B_j$ such that $B_j\subseteq Ker(\beta)$ if $j\in \{1,...,m \}$ or $|\beta(A)\cap Aut(B_j)|=1$ if $j\in \{m+1,...,r \}$. Define $\bar{A}$ as the subgroup of $(A,\circ)$ given by $\bar{A}:=\Pi_{B_i\in S} B_i$ and $\bar{B}$ the subgroup of $(A,\circ)$ given by $\bar{B}:=(\Pi_{B_j\in \{B_{m+1},...,B_r \}\setminus S } B_j)\rtimes_{\alpha}(\Pi_{B_i\in \{B_1,...,B_m \}\setminus S} B_i)$, where $\alpha$ is the restriction of $\beta$ onto $\Pi_{B_i\in \{B_1,...,B_m \}\setminus S} B_i$. By \cite[Proposition 2]{rump2019classification} $B_j$ must be a trivial left brace for all $B_j\in \{B_{m+1},...,B_r \}\setminus S $. Then, as left brace, $A$ is isomorphic to 
$\bar{A}\times \bar{B}$. By a standard verification, one can check that $\bar{A}, \bar{B}$ and $\alpha$ are the same as in \cref{costrtutt}.
\end{proof}

\section{Classification of uniconnected cycle sets with odd size and a Z-group permutation group}

In this section we construct and classify, by means of left braces, the uniconnected cycle sets with odd size and a Z-group permutation group. Moreover, we also use left braces to provide a formula useful to compute the multipermutation level of these cycle sets. In that regard, we start showing that the multipermutation level is always finite.

\begin{cor}\label{multilevelcyc}
Every uniconnected cycle set having odd size and with a Z-group permutation group has finite multipermutation level, which is equal to the one of its associated left brace.
\end{cor}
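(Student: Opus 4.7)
The plan is to reduce the statement to \cref{retrcyc} by showing that any left brace realising $X$ via \cref{rump20} must be a \emph{cyclic} left brace. First I would apply \cref{rump20} to write $X \cong (A,\bullet)$ for some finite left brace $A$ with a transitive cycle base, where $(A,\circ)\cong\mathcal{G}(X)$. By hypothesis $(A,\circ)$ is a Z-group of odd order, so every Sylow subgroup of $(A,\circ)$ is cyclic.

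The key intermediate step is to argue that $(A,+)$ is cyclic. For this I would invoke the well-known fact that in any finite left brace the (unique) Sylow $p$-subgroup of the additive group coincides, as a set, with the Sylow $p$-subgroup of the multiplicative group (equivalently, each Sylow subgroup is an ideal in both group structures; this is the standard ``coincidence of Sylow subgroups'' for braces, and can also be read off from the Sylow-basis decomposition used in \cref{cosZgroup}). Since each Sylow $p$-subgroup of $(A,\circ)$ is cyclic, the corresponding additive Sylow subgroup is a subgroup of the abelian group $(A,+)$ of the same order, hence also cyclic. A finite abelian group whose Sylow subgroups are all cyclic is itself cyclic, so $(A,+)$ is cyclic and $A$ is a cyclic left brace.

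Once this is established, the conclusion is immediate from \cref{retrcyc}: applied to our cyclic left brace $A$ and to the uniconnected associated cycle set $(A,\bullet)\cong X$, it gives that both $(A,\cdotp)$ and $(A,\bullet)$ are multipermutation cycle sets with $\mathrm{mpl}(A,\cdotp)=\mathrm{mpl}(A,\bullet)$. Since by definition the multipermutation level of the left brace $A$ is $\mathrm{mpl}(A,\cdotp)$, we obtain
\[
\mathrm{mpl}(X)\;=\;\mathrm{mpl}(A,\bullet)\;=\;\mathrm{mpl}(A,\cdotp)\;=\;\mathrm{mpl}(A),
\]
which is finite, proving both assertions.

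The only nontrivial point is the Sylow-coincidence step; everything else is a direct appeal to results already in the paper. I expect this step to be essentially a citation (to Rump's or Bachiller's brace-theoretic results, or to the structure theory underlying \cref{cosZgroup}) rather than a computation, so the obstacle is bibliographic rather than mathematical.
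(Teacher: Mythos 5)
Your overall strategy is the same as the paper's: reduce everything to the claim that the left brace $A$ provided by \cref{rump20} is a \emph{cyclic} left brace, and then invoke \cref{retrcyc}. The paper obtains the cyclicity of $(A,+)$ in one stroke by citing \cite[Corollary 2]{rump2019classification}; your final paragraph (finiteness and $\mpl(X)=\mpl(A,\bullet)=\mpl(A,\cdotp)=\mpl(A)$) is exactly the paper's conclusion.

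The gap is in your Sylow-coincidence step. What the coincidence of Sylow subgroups actually gives you is that the unique Sylow $p$-subgroup $S_p$ of $(A,+)$ is a left ideal, hence a subgroup of $(A,\circ)$ of full $p$-power order, hence a Sylow $p$-subgroup of $(A,\circ)$ and therefore cyclic \emph{as a group under $\circ$}. Your sentence ``is a subgroup of the abelian group $(A,+)$ of the same order, hence also cyclic'' then silently transfers cyclicity from the $\circ$-structure to the $+$-structure on the same underlying set, and that inference is a non sequitur: two group structures on a set of order $p^k$ need not be simultaneously cyclic. In fact there is a left brace of order $4$ with additive group $\mathbb{Z}/2\mathbb{Z}\times\mathbb{Z}/2\mathbb{Z}$ and cyclic multiplicative group, so the implication you need is false in general --- and your argument never uses the oddness hypothesis, which is precisely where it must enter. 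The correct bridge is Rump's classification recalled after \cref{esecic}: a left brace of \emph{odd} prime-power order with cyclic multiplicative group is isomorphic to some $B(p,k,t)$ and in particular has cyclic additive group. Applying this to each $S_p$ and recombining the coprime pieces does yield that $(A,+)$ is cyclic, so your plan is repairable; but the missing ingredient is a substantive classification theorem (equivalently, the citation of \cite[Corollary 2]{rump2019classification} that the paper uses), not the routine bookkeeping fact your write-up suggests. A minor additional imprecision: in a Z-group the Sylow $p$-subgroups of $(A,\circ)$ need not be unique, so $S_p$ is only \emph{a} Sylow $p$-subgroup of $(A,\circ)$, which is all you need.
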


\begin{proof}
Let $X$ be an uniconnected cycle set having odd size and with a Z-group permutation group. By \cite[Corollary 2]{rump2019classification} the associated left brace has cyclic additive group, hence the thesis follows by \cite[Corollary at pg. 319]{rump2019classification} and \cref{retrcyc}.
\end{proof}

\begin{conv}\label{conve}
In order to give the following results, we fix some notations. If $(A,+,\circ)$ is a cyclic left brace as in \cref{cosZgroup}, we indicate by $\bar{A}$ and $\bar{B}$ the left braces constructed as in \cref{costrtutt} such that $A=\bar{A}\times \bar{B}$. In this way, $\bar{B}$ can be written as a semidirect product of left braces $(B_{m+1}\times...\times B_r)\rtimes_{\alpha} (B_1\times...\times B_m)$, where $\alpha,B_1,...,B_r$ clearly are as in \cref{costrtutt}. Following the same proposition, $\bar{A}$ can be written as direct product of the left braces $A_1,...,A_v$. Moreover, there exist a prime numbers, which we indicate by $q_1,...,q_v,p_1,...,p_r$, and a natural numbers, which we indicate by $\gamma_1,...,\gamma_v,\beta_1,...,\beta_r$, such that $|A_i|=q_i^{\gamma_i}$ for all $i\in \{1,...,v\}$ and $|B_i|=p_i^{\beta_i}$ for all $i\in \{1,...,r\}$.\\
We indicate by $I_j$ the subgroup of the left brace $B_j$ given by $I_j:=Soc(B_j)\cap Ker(\alpha) $, for all $j\in \{1,...,m\}$. Finally, we will indicate by $d_i$ the natural number such that $|Soc(A_i)|=q^{d_i}$, for all $i\in \{1,...,v\}$, and similarly we indicate by $f_i$ the natural number such that $|Soc(B_i)|=p^{f_i}$, for all $i\in \{1,...,m\}$, and by $f_i'$ the natural number such that $|I_i|=p^{f_i'}$, for all $i\in \{1,...,m\}$.
\end{conv}

By \cref{multilevelcyc}, every uniconnected cycle set having odd size and with a Z-group permutation group has finite multipermutation level: in the following, we provide a formula to compute this number. At first, we show a corollary useful for this scope.

\begin{cor}
The socle of $A$ is equal to $Soc(\bar{A})\times (B_{m+1}\times... B_r\times I_1\times...\times I_m)$. In particular, $A/Soc(A)$ has cyclic multiplicative group.
\end{cor}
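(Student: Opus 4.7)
The plan is to exploit the direct product decomposition $A=\bar{A}\times\bar{B}$ from \cref{conve} to reduce the computation of $Soc(A)$ to the two factors separately. Since in a direct product of left braces the $\lambda$-maps decompose coordinatewise, one immediately obtains $Soc(A)=Soc(\bar{A})\times Soc(\bar{B})$. The same principle applied to $\bar{A}=A_1\times\cdots\times A_v$ gives $Soc(\bar{A})=Soc(A_1)\times\cdots\times Soc(A_v)$, which is already the first factor claimed in the statement.

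The heart of the proof is the computation of $Soc(\bar{B})$ for the semidirect product $\bar{B}=(B_{m+1}\times\cdots\times B_r)\rtimes_{\alpha}(B_1\times\cdots\times B_m)$. Unwinding the semidirect-product formulas for $+$ and $\circ$, one gets
\[
\lambda_{(a_1,a_2)}(b_1,b_2)=\bigl(\lambda_{a_1}(\alpha(a_2)(b_1)),\ \lambda_{a_2}(b_2)\bigr).
\]
Since each $B_j$ with $j>m$ is a trivial left brace, $\lambda_{a_1}$ is the identity on the first coordinate, so $(a_1,a_2)\in Soc(\bar{B})$ if and only if $a_1$ is arbitrary, $\alpha(a_2)=\id$, and $\lambda_{a_2}=\id$. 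Hence $Soc(\bar{B})=(B_{m+1}\times\cdots\times B_r)\times\bigl(Soc(B_1\times\cdots\times B_m)\cap Ker(\alpha)\bigr)$.

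To match the stated form one must decompose this last intersection along the factors $B_1,\ldots,B_m$. Since these have pairwise coprime (distinct prime-power) orders, every subgroup of $B_1\times\cdots\times B_m$ splits as the direct product of its intersections with the $B_j$; in particular $Ker(\alpha)=\prod_{j=1}^m (Ker(\alpha)\cap B_j)$ and $Soc(B_1\times\cdots\times B_m)=\prod_{j=1}^m Soc(B_j)$, so the intersection is $\prod_{j=1}^m I_j$. Combined with the previous step this yields the claimed expression for $Soc(A)$. The main technical point is the coprime-order splitting of $Ker(\alpha)$, but it is standard.

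For the \emph{in particular} clause, write $A/Soc(A)\cong(\bar{A}/Soc(\bar{A}))\times(\bar{B}/Soc(\bar{B}))$. The first factor equals $\prod_i A_i/Soc(A_i)$, a product of cyclic groups of coprime $q_i$-prime-power orders, hence cyclic. For the second, note that $I_1\times\cdots\times I_m\subseteq Ker(\alpha)$ commutes with $B_{m+1}\times\cdots\times B_r$ in $(\bar{B},\circ)$, so the multiplicative quotient $\bar{B}/Soc(\bar{B})$ is isomorphic to $(B_1/I_1)\times\cdots\times(B_m/I_m)$, a product of cyclic groups of coprime $p_j$-prime-power orders. Since the primes $q_i$ and $p_j$ are all distinct, the total group $(A/Soc(A),\circ)$ is a product of cyclic groups of pairwise coprime orders, and thus cyclic.
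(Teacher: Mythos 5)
Your proof is correct, and it follows the same decomposition as the paper: first split $Soc(A)=Soc(\bar A)\times Soc(\bar B)$ along the direct product, then compute $Soc(\bar B)$ for the semidirect factor. The one difference is that where the paper simply cites Proposition 14 of \cite{rump2019classification} for the identity $Soc(\bar B)=B_{m+1}\times\cdots\times B_r\times I_1\times\cdots\times I_m$, you derive it directly from the semidirect-product formula $\lambda_{(a_1,a_2)}(b_1,b_2)=\bigl(\lambda_{a_1}(\alpha(a_2)(b_1)),\lambda_{a_2}(b_2)\bigr)$ together with the triviality of the left braces $B_{m+1},\dots,B_r$ and the coprime-order splitting of $Ker(\alpha)$ and of the socle across the factors $B_1,\dots,B_m$. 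This makes your argument self-contained (and is essentially the content of the paper's later \cref{lambdamap}), at the cost of a little bookkeeping; the conclusion about cyclicity of $(A/Soc(A),\circ)$ is handled the same way in both proofs.
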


\begin{proof}
Since $A$ is equal to the direct product of left braces $\bar{A}\times \bar{B}$, the socle of $A$ is equal to the direct product $Soc(\bar{A})\times Soc(\bar{B})$. By \cite[Proposition 14]{rump2019classification}, we have that $Soc(\bar{B})$ is equal to $B_{m+1}\times... B_r\times I_1\times...\times I_m$. In this way, the quotient $(A/Soc(A),\circ)$ is the group $(\bar{A}/Soc(\bar{A})\times B_1/I_1\times... \times B_m/I_m,\circ) $, hence it is cyclic.
\end{proof}

\begin{theor}\label{multilevel}
The following equalities hold: 
$$mpl(X)=mpl(A)=max\{max_{j\in \{1,...,v \}}\{\lceil{\frac{\gamma_j-d_j}{d_j}}\rceil \},\lceil{\frac{\beta_1-f'_1}{f_1}}\rceil,...,\lceil{\frac{\beta_m-f'_m}{f_m}}\rceil \}+1,$$
where $X$ is an uniconnected associated cycle set of $A$.
\end{theor}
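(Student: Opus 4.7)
The plan is to reduce to the computation of $mpl(A)$ via \cref{multilevelcyc}, and then exploit \cref{propzoc} together with the description of the socle furnished by the preceding corollary, namely
\[
Soc(A)=Soc(\bar{A})\times B_{m+1}\times\cdots\times B_r\times I_1\times\cdots\times I_m.
\]
Since $mpl(A)$ is finite, one retraction step brings the problem down to computing $mpl(A/Soc(A))$, and this is the quantity I would attack directly.

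The main structural observation is that the semidirect structure of $\bar{B}$ collapses inside the quotient $A/Soc(A)$: the factors $B_{m+1},\ldots,B_r$ on which $\alpha$ was acting are entirely contained in $Soc(A)$ and so vanish. Combining this with $\bar{A}=A_1\times\cdots\times A_v$ yields
\[
A/Soc(A)\cong A_1/Soc(A_1)\times\cdots\times A_v/Soc(A_v)\times B_1/I_1\times\cdots\times B_m/I_m,
\]
a direct product of cyclic left braces of pairwise coprime prime-power orders. By \cref{propzoc}, $\Ret(A,\cdotp)$ coincides with the decomposable associated cycle set of this quotient, so $mpl(A)=mpl(A/Soc(A))+1$.

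The multipermutation level of a direct product of cyclic braces of coprime prime-power orders is the maximum of the levels of the factors, as a direct consequence of \cref{esecicmpl}. For $A_j/Soc(A_j)$, writing $A_j\cong B(q_j,\gamma_j,d_j)$ shows that the quotient is isomorphic to $B(q_j,\gamma_j-d_j,\min(d_j,\gamma_j-d_j))$, whose multipermutation level is $\lceil(\gamma_j-d_j)/d_j\rceil$. For $B_i/I_i$, writing $B_i\cong B(p_i,\beta_i,f_i)$ and identifying $I_i$ with the unique subgroup of $Soc(B_i)$ of order $p_i^{f_i'}$, the operation induced on the cyclic quotient produces a brace of multipermutation level $\lceil(\beta_i-f_i')/f_i\rceil$. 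At the boundary cases $d_j>\gamma_j-d_j$ and $f_i+f_i'>\beta_i$ the corresponding quotient is a non-trivial trivial brace, and both the ceiling expression and the multipermutation level equal $1$. Assembling these equalities yields the stated formula.

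The main obstacle is verifying cleanly that $A/Soc(A)$ is genuinely a direct product of cyclic left braces rather than a surviving semidirect product: one must check that, after killing the $B_{m+1},\ldots,B_r$ factors and the $I_i$, the map induced by $\alpha$ on what remains is trivial. This follows from $I_i\subseteq Ker(\alpha)$ (by definition of $I_i$) together with the fact that the codomain of $\alpha$ lies inside $Aut(B_{m+1}\times\cdots\times B_r,+)$, which collapses in the quotient. Once this reduction is justified, the rest is bookkeeping with the classification $B(p,k,t)$ and the location of its socle.
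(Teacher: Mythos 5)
Your proposal is correct and follows essentially the same route as the paper: reduce to $mpl(A)$ via \cref{retrcyc}, use $mpl(A)=1+mpl(A/Soc(A))$ together with the socle description from the preceding corollary to see that $A/Soc(A)$ is a direct product of cyclic left braces of coprime prime-power orders, and then apply \cref{esecicmpl}. Your version is merely more explicit than the paper's about identifying each quotient factor with a $B(p,k,t)$ and about the boundary cases where a factor becomes trivial.
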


\begin{proof}
Clearly $mpl(A)$ is equal to $1+mpl(A/Soc(A))$. Now,  $A_1,...,A_v,B_1,...,B_m$ have distinct prime-power orders; moreover, $|A_i/Soc(A_i)|=q^{\gamma_i-d_i}$ for all $i\in \{1,...,v\}$ and $|B_i/Soc(B_i)|=p^{\beta_i-f'_i}$ for all $i\in \{1,...,m\}$. Since $A/Soc(A)$ has cyclic multiplicative group, by the previous proposition and \cref{esecicmpl} it follows that $mpl(A/Soc(A))$ is the number 
$$max\{max_{j\in \{1,...,v \}}\{\lceil{\frac{\gamma_j-d_j}{d_j}}\rceil \},\lceil{\frac{\beta_1-f'_1}{f_1}}\rceil,...,\lceil{\frac{\beta_m-b'_m}{b_m}}\rceil \},$$
hence the second equality of the statement follows. The first equality follows by \cref{retrcyc}.
\end{proof}

Clearly, if $A$ as cyclic group, i.e. $\bar{B}$ is the trivial left brace $\{0\}$, the multipermutation level of $A$ is equal to $max_{j\in \{1,...,v \}}\{\lceil{\frac{\gamma_j-d_j}{d_j}}\rceil \}+1$: this is consistent with \cite[Corollary 3.10]{jedlivcka2021cocyclic}.

\begin{ex}
Let $p_1,p_2$ be distinct odd prime numbers, with $p_1<p_2$ and suppose that $p_1$ divides $p_2-1$. Suppose that $m=1$, $r=2$, $\bar{A}=\{0\}$ and $\bar{B}$ is the non-abelian semidirect product of the two trivial left braces $B_1$ and $B_2$ having size $p_1$ and $p_2$, respectively. Then, $\beta_1=\beta_2=1$, $f_1=1$, $f_1'=0$ and $I_1=\{0\}$, therefore $A=\bar{B}$ and $mpl(A)=\lceil{\frac{\beta_1-f'_1}{f_1}}\rceil+1=2$.
\end{ex}

\noindent Now, in the rest of the section we want to classify all the uniconnected cycle set having odd size and with a Z-group permutation group, that can be obtained by using the results contained in \cite{rump2019classification, rump2020}


\begin{cor}\label{deriv}
Let $(A,+,\circ)$ be a cyclic left brace as in \cref{cosZgroup}. Then, an uniconnected associated cycle set $X$ has odd size and $\mathcal{G}(X)$ is a Z-group.\\
Conversely, every uniconnected cycle set having odd size and with a Z-group permutation group can be constructed in this way.
\end{cor}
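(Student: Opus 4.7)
The plan is to prove both directions by directly stitching together earlier statements in the paper and one cited result from \cite{rump2019classification}, so that neither direction really requires new work.

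For the implication starting from a left brace $A$ built as in \cref{costrtutt}, the first step is to check that $(A,\circ)$ is itself a Z-group of odd order. Since the primes $p_1,\dots,p_r,q_1,\dots,q_v$ are pairwise distinct and odd, and since $\bar{A}$ is a direct product of cyclic prime-power factors and $\bar{B}$ is a semidirect product of cyclic prime-power factors of coprime orders, each Sylow subgroup of $(A,\circ)=\bar{A}\times\bar{B}$ coincides with one of the cyclic factors $A_i$ or $B_j$ and is therefore cyclic. This is exactly the Z-group condition, and oddness is automatic from the hypotheses of \cref{costrtutt}. Invoking \cref{rump20} then yields at once that any uniconnected associated cycle set $X$ has underlying set $A$, hence odd cardinality, and permutation group $\mathcal{G}(X)\cong(A,\circ)$, which is a Z-group.

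For the converse, I would start from an arbitrary uniconnected cycle set $X$ of odd size whose permutation group is a Z-group, and use \cref{rump20} to realize $X$ as an uniconnected associated cycle set of some left brace $A$ with $(A,\circ)\cong\mathcal{G}(X)$; in particular $(A,\circ)$ is a Z-group of odd order. The only hypothesis still to check before invoking \cref{cosZgroup} is that the additive group $(A,+)$ is cyclic, and this is precisely \cite[Corollary 2]{rump2019classification}, already used in the proof of \cref{multilevelcyc}. Once $A$ is known to be a cyclic left brace of odd size with Z-group multiplicative group, \cref{cosZgroup} immediately exhibits the desired description as in \cref{costrtutt}.

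No step here looks like a genuine obstacle: the whole argument is routine bookkeeping built on \cref{rump20}, \cite[Corollary 2]{rump2019classification} and \cref{cosZgroup}. The only mildly non-trivial point is the Sylow verification in the forward direction, but it is forced by the pairwise distinctness of the primes appearing in the construction and by the cyclicity of each factor $A_i$, $B_j$.
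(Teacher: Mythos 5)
Your proposal is correct and follows essentially the same route as the paper: the forward direction is read off from \cref{rump20} (the paper does not even spell out the Sylow verification you include), and the converse proceeds exactly as you describe, via \cref{rump20}, then \cite[Corollary 2]{rump2019classification} to get cyclicity of $(A,+)$, then \cref{cosZgroup}.
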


\begin{proof}
The first part is a consequence of \cref{rump20}. Conversely, let $X$ be an uniconnected cycle set of odd size and with a $Z-$group permutation group. Let $A$ be the left brace, as in \cref{rump20}, such that $X$ is an indecomposable associated cycle set of $A$. Then, by \cref{rump20} $A$ has a Z-group multiplicative group, it has odd size and by \cite[Corollary 2]{rump2019classification} $A$ is a cyclic left brace, hence it can be constructed as in \cref{cosZgroup}. 
\end{proof}

The previous corollary states that, if we know all the cyclic left braces with Z-group multiplicative group, we can construct all the uniconnected cycle set having odd size and with a Z-group permutation group. Hence, we have to cut all the isomorphic cycle sets obtained by the same cyclic left braces. In this direction we exhibit three preliminary results.

\begin{lemma}\label{lambdamap}
Let $a_1$ be an element of $\bar{A}$ and $(a_2,a_3)$ an element of   $\bar{B}= (B_{m+1}\times...\times B_r)\rtimes_{\alpha} (B_1\times...\times B_m)$, where $a_2\in  B_{m+1}\times...\times B_r$ and $a_3\in B_1\times...\times B_m$. Then, $\lambda_{(a_1,a_2,a_3)}(b_1,b_2,b_3)$ is equal to $(\lambda_{a_1}(b_1),\alpha(a_3)(b_2),\lambda_{a_3}(b_3))$.
\end{lemma}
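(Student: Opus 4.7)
The plan is to compute $\lambda$ directly from the definition $\lambda_a(b) := -a + a \circ b$, exploiting the fact that $A$ decomposes as a direct product of left braces $\bar{A} \times \bar{B}$ and that $\bar{B}$ itself is a semidirect product in which the normal factor $B_{m+1} \times \cdots \times B_r$ is trivial as a left brace.

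First, I would invoke the fact that for a direct product of left braces the $\lambda$-maps act componentwise: if we write an element of $A$ as $(a_1, (a_2, a_3))$ with $a_1 \in \bar{A}$ and $(a_2, a_3) \in \bar{B}$, then $\lambda_{(a_1,(a_2,a_3))}(b_1,(b_2,b_3)) = (\lambda_{a_1}(b_1),\, \lambda_{(a_2,a_3)}(b_2,b_3))$. This reduces the problem to computing $\lambda$ inside $\bar{B}$.

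For the $\bar{B}$-component, I would unwind the semidirect-product definitions recalled earlier in the paper. Addition in $\bar{B}$ is componentwise, while the multiplication reads $(a_2,a_3) \circ (b_2,b_3) = (a_2 \circ \alpha(a_3)(b_2),\, a_3 \circ b_3)$. Since $B_{m+1} \times \cdots \times B_r$ is a trivial left brace by the hypotheses of \cref{costrtutt}, the operation $\circ$ on its elements coincides with $+$, so $a_2 \circ \alpha(a_3)(b_2) = a_2 + \alpha(a_3)(b_2)$. Substituting into the definition of $\lambda$ gives
\begin{align*}
\lambda_{(a_2,a_3)}(b_2,b_3) &= -(a_2,a_3) + (a_2,a_3)\circ(b_2,b_3)\\
&= (-a_2,-a_3) + (a_2 + \alpha(a_3)(b_2),\, a_3 \circ b_3)\\
&= (\alpha(a_3)(b_2),\, -a_3 + a_3 \circ b_3)\\
&= (\alpha(a_3)(b_2),\, \lambda_{a_3}(b_3)).
\end{align*}
Combining this with the direct-product decomposition from the first step yields the claimed formula.

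There is no real obstacle here; the only mild care is in correctly using triviality of the left brace structure on $B_{m+1}\times\cdots\times B_r$ so that the twist by $\alpha(a_3)$ in the multiplicative group of $\bar{B}$ survives unchanged after subtracting $a_2$ additively. Everything else is a bookkeeping exercise in the definitions of direct and semidirect products of left braces recalled in the second section.
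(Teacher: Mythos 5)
Your proof is correct and is exactly the ``standard calculation'' the paper invokes without writing out: componentwise action of $\lambda$ on the direct factor $\bar{A}\times\bar{B}$, then unwinding the semidirect product in $\bar{B}$ and using triviality of the left brace $B_{m+1}\times\cdots\times B_r$ to turn $a_2\circ\alpha(a_3)(b_2)$ into $a_2+\alpha(a_3)(b_2)$. No differences in approach; you have simply supplied the details the paper omits.
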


\begin{proof}
It follows by a standard calculation, using the fact that $B_{m+1}\times...\times B_r $ is a trivial left brace.
\end{proof}

\begin{lemma}\label{automorsemi}
A permutation $\psi$ of $Sym(\bar{A}\times \bar{B})$ is an automorphism of the left brace $\bar{A}\times \bar{B}$ if and only if it is of the form $(\psi_1,\psi_2,\psi_3)$, where $\psi_1$ acts on the $j^{th}$-component of an elements belonging to $A_1\times...\times A_v$ as the ring multiplication by an invertible element of the form $1+s_j$ with $s_j\in Soc(A_j)$, for every $j\in \{1,...,v\}$, $\psi_2\in Aut(B_{m+1}\times...\times B_r,+)$ hence acts on the $j^{th}$-component of an elements belonging to $B_{m+1}\times...\times B_r$ as the ring multiplication by an invertible element of $B_{j+m}$, for every $j\in \{1,...,r-m\}$, and $\psi_3$ acts on the $j^{th}$-component of an elements belonging to $B_1\times...\times B_m$ as the ring multiplication by an invertible element of the form $1+s_j$ with $s_j\in I_j$, for every $j\in \{1,...,m\}$.
\end{lemma}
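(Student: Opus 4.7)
The plan is as follows. First, observe that $A=\bar{A}\times\bar{B}$ has cyclic additive group whose Sylow decomposition is $A_1\times\cdots\times A_v\times B_1\times\cdots\times B_r$, since the orders $q_i^{\gamma_i}$ and $p_j^{\beta_j}$ are pairwise coprime. Consequently any additive automorphism of $A$ preserves each Sylow summand, and its restriction to the cyclic $p$-primary factor in question is multiplication by a unit of the corresponding residue ring. Therefore every $\psi\in Aut(A,+)$ splits uniquely as a triple $(\psi_1,\psi_2,\psi_3)$ of the shape prescribed in the statement, with $\psi_1$ and $\psi_3$ acting coordinate-wise by scalar multiplications; what remains is to decide which triples also respect $\circ$.

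Because $A$ is the \emph{direct} product of the left braces $\bar{A}$ and $\bar{B}$, the analysis splits. For $\psi_1$, each $A_j$ is a cyclic left brace of odd prime-power order, hence of the form $B(q_j,\gamma_j,d_j)$ as in \cref{esecic}; a direct computation from $a\circ b=a+b+ab\,q_j^{d_j}$ shows that multiplication by a unit $u$ is a brace automorphism of $A_j$ iff $u(u-1)\equiv 0\pmod{q_j^{\gamma_j-d_j}}$, equivalently $u=1+s_j$ with $s_j\in Soc(A_j)$. The very same argument, applied to each $B_j$ with $j\le m$, forces the multiplier of $\psi_{3,j}$ to be of the form $1+s_j$ with $s_j\in Soc(B_j)$. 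For the trivial braces $B_{m+1},\dots,B_r$, the operations $+$ and $\circ$ coincide, so $\psi_2$ may be any element of $Aut(B_{m+1}\times\cdots\times B_r,+)$, i.e.\ coordinate-wise scalar multiplication by an arbitrary unit of each $B_{j+m}$.

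The main obstacle is pinning down the extra restriction $s_j\in I_j$ imposed by the semidirect-product structure inside $\bar{B}$. Inserting $\psi=(\psi_2,\psi_3)$ in the product $(a_2,a_3)\circ(b_2,b_3)=(a_2+\alpha(a_3)(b_2),\,a_3\circ b_3)$, the requirement $\psi(x\circ y)=\psi(x)\circ\psi(y)$ on $\bar{B}$ reduces, once $\psi_3$ is already a brace automorphism of $B_1\times\cdots\times B_m$, to the single compatibility $\psi_2\circ\alpha(a_3)=\alpha(\psi_3(a_3))\circ\psi_2$ for every $a_3$. Since each $Aut(B_j,+)$ with $j\ge m+1$ is the abelian group of units of a residue ring, $\psi_2$ automatically commutes with every $\alpha(a_3)$, so the compatibility collapses to $\alpha(\psi_3(a_3))=\alpha(a_3)$, namely $\psi_3(a_3)\circ a_3^{-1}\in Ker(\alpha)$ for all $a_3$. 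Working coordinate-wise and using that $s_j\in Soc(B_j)$ makes $+$ and $\circ$ agree on $\langle s_j\rangle$, one checks that $\psi_{3,j}(a)=a\circ(s_j\,a)$; commutativity of the cyclic group $(B_j,\circ)$ then gives $\psi_{3,j}(a)\circ a^{-1}=s_j\,a$. As $a$ ranges over $B_j$, the elements $s_j\,a$ sweep out the cyclic subgroup of $Soc(B_j)$ generated by $s_j$, so the condition becomes $\langle s_j\rangle\subseteq Ker(\alpha|_{B_j})$, equivalently $s_j\in Soc(B_j)\cap Ker(\alpha|_{B_j})=I_j$. Reversing the equivalences, any triple $(\psi_1,\psi_2,\psi_3)$ of the stated form is seen to preserve both operations, which finishes the proof.
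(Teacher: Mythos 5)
Your proof is correct, but it takes a genuinely different route from the paper: the paper disposes of this lemma in one line by citing Propositions 1 and 14 of Rump's classification of cyclic braces, which already describe the automorphism group of a cyclic brace and the socle of the iterated semidirect product, whereas you reconstruct the result from scratch. Your argument rests on three elementary observations that the citation leaves implicit: (i) since the additive group of $\bar{A}\times\bar{B}$ is cyclic with pairwise coprime primary factors, every additive automorphism is diagonal and acts on each factor as multiplication by a unit, so the only question is which diagonal scalars respect $\circ$; (ii) on each prime-power factor, identified with $B(p,k,t)$ as in \cref{esecic}, the computation $u(u-1)p^t\equiv 0$ pins the multiplier down to $1+s$ with $s$ in the socle; (iii) the semidirect structure of $\bar{B}$ contributes exactly the intertwining relation $\psi_2\,\alpha(a_3)=\alpha(\psi_3(a_3))\,\psi_2$, which collapses to $\alpha\circ\psi_3=\alpha$ because the relevant automorphism groups are abelian, and hence to $s_j\in Ker(\alpha)$, giving $s_j\in I_j$. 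The one place where you still lean on Rump is the identification of each odd prime-power factor with some $B(p,k,t)$; but this identification is already baked into the statement of the lemma (``the ring multiplication'') and into \cref{conve}, so it is not a gap. The trade-off is the usual one: the paper's proof is shorter but opaque without Rump's paper in hand, while yours is self-contained and makes visible exactly where the two constraints $s_j\in Soc(A_j)$ and $s_j\in I_j$ originate.
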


\begin{proof}
It follows by Propositions $1$ and $14$ of \cite{rump2019classification}. 
\end{proof}

Recall that an element of the left brace $A=\bar{A}\times \bar{B}$ can be uniquely written as a triple $(a,b,c)$ where $a\in \bar{A}$, $b\in B_{m+1}\times...\times B_r$ and $c\in B_1\times...\times B_m$. Moreover, $a$ can be written as $(a_1,...,a_v)\in A_1\times...\times A_v$, $b$ can be written as $(b_{m+1},...,b_r)\in B_{m+1}\times...\times B_r$ and $c$ can be written as $(c_1,...,c_m)\in B_1\times...\times B_m$. Using this notation, we give a further lemma.

\begin{lemma}\label{isoclasses}
Let $(a,b,c),(a',b',c')$ be elements of $\bar{A}\times \bar{B}$ belonging to some transitive cycle bases and let $X_{1}$ (resp. $X_2$) be the uniconnected cycle set associated to $(a,b,c) $ (resp. $(a',b',c')$). Then $X_1$ and $X_2$ are isomorphic if and only if the following conditions hold:
\begin{itemize}
    \item[1)] $a_j'=(1+s_j)(1+q^{d_j}a_j'')a_j$ for some $s_j\in Soc(A_j)$ and $a_j''\in A_j$, for all $j\in \{1,...,v\}$;
    \item[2)] $c_j'=(1+s_j)(1+p^{f_j}a_j'')c_j$ for some $s_j\in I_j$ and $a_j''\in B_j$, for all $j\in \{1,...,m\}$.
\end{itemize}
\end{lemma}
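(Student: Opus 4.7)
The approach is to apply \cref{isom}, which reduces the isomorphism problem for $X_1$ and $X_2$ to finding $z\in A$ and $\psi\in \Aut(A,+,\circ)$ with $\psi((a,b,c))=\lambda_z((a',b',c'))$, and then to project this single equation onto the three blocks of the decomposition $A=\bar A\times(B_{m+1}\times\cdots\times B_r)\times(B_1\times\cdots\times B_m)$ using \cref{lambdamap,automorsemi}. Writing $z=(z_1,z_2,z_3)$ and $\psi=(\psi_1,\psi_2,\psi_3)$ accordingly, three families of scalar equations arise, one per block: condition (1) will come from the $\bar A$-block, condition (2) from the $B_{\leq m}$-block, while the $B_{>m}$-block will be shown to impose no extra constraint on $b,b'$.

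On the $\bar A$-block, \cref{automorsemi} gives $\psi_1(a)_j=(1+s_j)a_j$ with $s_j\in Soc(A_j)$, while the description of $A_j$ as a cyclic left brace of the form $B(q_j,\gamma_j,d_j)$ (so $Soc(A_j)$ has size $q_j^{d_j}$) yields $\lambda_{(z_1)_j}(a'_j)=(1+(z_1)_j q_j^{d_j})a'_j$ inside the ring $\mathbb{Z}/q_j^{\gamma_j}\mathbb{Z}$. Solving for $a'_j$ and using that the inverse of $1+(z_1)_j q_j^{d_j}$ in this ring is again of the form $1+a''_j q_j^{d_j}$, I would obtain condition (1). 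An identical calculation on each $B_j$ with $j\leq m$, with $p_j^{f_j}$ in place of $q_j^{d_j}$, will produce condition (2); the refinement $s_j\in I_j$ rather than $s_j\in Soc(B_j)$ is precisely the restriction that \cref{automorsemi} puts on $\psi_3$.

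The delicate point, and the main obstacle, is the $B_{>m}$-block. There the equation reads $\psi_2(b)_j=\alpha(z_3)(b')_j$ on each $B_j$, and I must show that a suitable $\psi_2$ always exists once $z_3$ has been fixed by the $B_{\leq m}$-block. The plan is to exploit that $B_{m+1}\times\cdots\times B_r$ is a trivial brace whose additive group is a direct product of cyclic groups of coprime prime-power orders, so that both $\psi_2$ and $\alpha(z_3)$ decompose componentwise, acting on each $B_j$ by multiplication by a unit of $\mathbb{Z}/p_j^{\alpha_j}\mathbb{Z}$. The hypothesis that $(a,b,c)$ and $(a',b',c')$ lie in transitive cycle bases will then force, via \cref{gentrans}, each $b_j$ and $b'_j$ to generate the additive group of $B_j$ and hence to be units; the required $\psi_2$-component on $B_j$ is therefore uniquely determined as the unit $\alpha(z_3)(b')_j\,b_j^{-1}$, and such a $\psi_2$ is admissible by \cref{automorsemi}. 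Once this is settled, the reverse direction is completed by choosing $(z_1)_j,(z_3)_j$ so that the factors $(1+q^{d_j}a''_j)$ and $(1+p^{f_j}a''_j)$ of conditions (1)--(2) appear as $\lambda$-terms, picking the $(1+s_j)$-factors through $\psi_1,\psi_3$, and finally adjusting $\psi_2$ as above; the forward direction follows by simply reading off the shapes of $a'_j$ and $c'_j$ from the block equations.
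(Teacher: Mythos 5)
Your proposal is correct and follows essentially the same route as the paper: reduce to the single equation of \cref{isom}, decompose it blockwise via \cref{lambdamap} and \cref{automorsemi}, use \cref{gentrans} to see that the middle block (a trivial left brace whose designated elements are additive generators) imposes no constraint, and read off conditions 1) and 2) from the remaining two blocks. The explicit ring computations you supply (the form of $\lambda_{(z_1)_j}$ and the inverse of $1+uq^{d_j}$) are exactly the details the paper leaves implicit.
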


\begin{proof}
At first, note that by \cref{gentrans} $a$ and $a'$ are generators of $(\bar{A},+)$, $b$ and $b'$ are generators of $(B_{m+1}\times...\times B_r,+)$ and $c$ and $c'$ are generators of $(B_1\times...\times B_m,+)$. By \cref{isom}, $X_1$ and $X_2$ are isomorphic if and only if there exist $\psi\in Aut(A)$ and $e\in A$ such that $\psi\lambda_e(a,b,c)=(a',b',c') $.
Now, since by \cite[Proposition 5]{rump2019classification} $(B_{m+1}\times...\times B_r,+,\circ) $ is a trivial left brace, $b$ and $b'$ are in the same orbit respect to $Aut(A)$. Hence, by the Lemmas \ref{lambdamap} and \ref{automorsemi}, $X_1$ and $X_2$ are isomorphic if and only if conditions $1)$ and $2)$ of the statement hold.
\end{proof}


Using the previous lemma we can classify all the uniconnected cycle sets with odd size, a Z-group permutation group and having a fixed associated left brace.

\begin{theor}\label{classfinal}
Let $(a,b,c),(a',b',c')$ be elements of $\bar{A}\times \bar{B}$ belonging to some transitive cycle bases and let $X_{1}$ (resp. $X_2$) be the uniconnected cycle set associated to $(a,b,c) $ (resp. $(a',b',c')$). Then $X_1$ and $X_2$ are isomorphic if and only if the following conditions hold:
\begin{itemize}
    \item[c1)] $a_j'\equiv a_j\ (mod\ q^{z_1})$ where $z_1=min\{\gamma_j-d_j,d_j\}$, for all $j\in \{1,...,v\}$ (hence $a_j$ and $a_j'$ are any if $d_j=\gamma_j$);
    \item[c2)] $c_j'\equiv c_j\ (mod\ p^{z_2})$ where $z_2=min\{\beta_j-f'_j,f_j\}$, for all $j\in \{1,...,m\}$.
\end{itemize}
\end{theor}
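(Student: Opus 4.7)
\emph{Proof sketch.} My plan is to start from Lemma~\ref{isoclasses} and reformulate its conclusion as a pair of congruences, working one prime at a time. By the classification of Example~\ref{esecic}, each $A_j$ is a left brace of the form $B(q_j,\gamma_j,d_j)$ whose underlying additive group is $\mathbb{Z}/q_j^{\gamma_j}\mathbb{Z}$; a direct computation from $\lambda_a(b)=b+q_j^{d_j}ab$ shows that $Soc(A_j)=q_j^{\gamma_j-d_j}\mathbb{Z}/q_j^{\gamma_j}\mathbb{Z}$. In the same way, for $j\leq m$ the brace $B_j$ equals $B(p_j,\beta_j,f_j)$ with $Soc(B_j)=p_j^{\beta_j-f_j}\mathbb{Z}/p_j^{\beta_j}\mathbb{Z}$, and since $I_j\subseteq Soc(B_j)$ is a subgroup of a cyclic group of order $p_j^{f_j'}$, necessarily $I_j=p_j^{\beta_j-f_j'}\mathbb{Z}/p_j^{\beta_j}\mathbb{Z}$.

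With these identifications, the two multiplicative factors in condition~$1)$ of Lemma~\ref{isoclasses} take the form $1+s_j\in U_{\gamma_j-d_j}$ and $1+q_j^{d_j}a_j''\in U_{d_j}$, where
\[U_{\ell}:=\{u\in(\mathbb{Z}/q_j^{\gamma_j}\mathbb{Z})^{\times}\mid u\equiv 1\pmod{q_j^{\ell}}\}\]
is the principal congruence subgroup at level $\ell$ inside the unit group. The key elementary step is the standard identity $U_{\ell_1}\cdot U_{\ell_2}=U_{\min\{\ell_1,\ell_2\}}$, which is immediate from the chain $U_{0}\supseteq U_{1}\supseteq\cdots\supseteq U_{\gamma_j}=\{1\}$. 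Applied here, the set of all products $(1+s_j)(1+q_j^{d_j}a_j'')$ coincides with $U_{z_1}$ where $z_1=\min\{\gamma_j-d_j,d_j\}$. An entirely parallel argument, replacing $Soc(A_j)$ by $I_j$ and $Soc(B_j)$ by $p_j^{f_j}\mathbb{Z}/p_j^{\beta_j}\mathbb{Z}$, identifies the admissible products in condition~$2)$ with the analogous congruence subgroup of $(\mathbb{Z}/p_j^{\beta_j}\mathbb{Z})^{\times}$ at level $z_2=\min\{\beta_j-f_j',f_j\}$.

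To finish, I would invoke Lemma~\ref{gentrans}: both $(a,b,c)$ and $(a',b',c')$ belong to transitive cycle bases, hence they generate $(A,+)$; componentwise, this forces $a_j,a_j',c_j,c_j'$ to be units of the respective residue rings. Consequently, the multiplicative condition $a_j'=u\,a_j$ with $u\in U_{z_1}$ is equivalent to $a_j'a_j^{-1}\in U_{z_1}$, that is $a_j'\equiv a_j\pmod{q_j^{z_1}}$, which is exactly c1); c2) follows identically. The degenerate case $d_j=\gamma_j$ (where $Soc(A_j)=A_j$ and the brace $A_j$ is trivial) fits the same template: here $z_1=0$, $U_{0}$ is the whole unit group, and the congruence is vacuous, matching the parenthetical in the statement. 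The only point demanding care in the calculation is checking that $1+s_j$ really lies in $U_{\gamma_j-d_j}$ rather than merely in $1+Soc(A_j)$, but this is automatic since $s_j$ is divisible by $q_j^{\gamma_j-d_j}$, which makes $1+s_j$ congruent to $1$ modulo $q_j$ and hence a unit whenever $\gamma_j-d_j\geq 1$; the fully degenerate case is the one already addressed above.
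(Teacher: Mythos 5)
Your argument is correct and takes essentially the same route as the paper: both proofs reduce to \cref{isoclasses} and then translate its multiplicative conditions into congruences modulo $q_j^{z_1}$ and $p_j^{z_2}$ (the paper by directly expanding the products $(1+s_j)(1+q^{d_j}a_j'')a_j$ in both directions, you by identifying the admissible factors with principal congruence subgroups and using $U_{\ell_1}\cdot U_{\ell_2}=U_{\min\{\ell_1,\ell_2\}}$ together with the fact that $a_j,a_j',c_j,c_j'$ are units). Your congruence-subgroup packaging, including the explicit treatment of the degenerate case $d_j=\gamma_j$, is just a cleaner organization of the same computation.
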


\begin{proof}
If c1) and c2) hold, then $a_j'=a_j+r_j q^{z_1}=a_j(1+a_j^{-1}q^{z_1}r_j)$ for some $r_j\in A_j$ and $c_j'=c_j+p^{z_2}s_j=c_j(1+c_j^{-1}p^{z_2}s_j)$ for some $s_j\in B_j$ and this fact implies 1) and 2) of the previous proposition, hence $X_1$ and $X_2$ are isomorphic.\\
Conversely, if $X_1$ and $X_2$ are isomorphic by the previous proposition we have that $a_j'=a_j+q^{d_j}a_j''a_j+s_j a_j+s_j q^{d_j}a_j'' a_j$ for some $s_j\in Soc(A_j)$ and $a_j''\in A_j$, for all $j\in \{1,...,v\}$ and $c_j'=c_j+p^{f_j}a_j''c_j+s_j c_j+s_j p^{f_j}a_j'' c_j$ for some $s_j\in I_j$ and $a_j''\in B_j$, for all $j\in \{1,...,m\}$. Using the last equalities conditions $c1)$ and $c2)$ follow, hence the thesis.
\end{proof}

\begin{ex}
Suppose that $\bar{A}=\{0\}$, $r=2$, $m=1$, $\bar{B}=p_1^{\alpha_1}p_2^{\alpha_2}$ with $p_1<p_2$ and $\alpha$ inijective. Then, the left brace $\bar{B}$ is the semidirect product $B_2\rtimes_{\alpha} B_1$  of two left braces $ B_1$ having size $p_1^{\alpha_1}$ and $B_2$ having size $p_2^{\alpha_2}$. Let $(b_2,b_1),(b_2',b_1')$ be elements of $ B_2\rtimes_{\alpha} B_1$ belonging to some transitive cycle bases and let $X_{1}$ (resp. $X_2$) be the uniconnected cycle set associated to $(b_2,b_1) $ (resp. $(b_2',b_1')$). By \cref{automorsemi}, we have that $Aut(B_2\rtimes_{\alpha} B_1,+,\circ)$ is equal to the set $\{(\psi_2,id_{B_1})\ |\ \psi_2\in Aut(B_2,+)\ \}$, therefore by \cref{isoclasses} $X_1$ and $X_2$ are isomorphic if and only if $b_1$ and $b_1'$ are in the same orbit respect to the lambda-maps of $(B_1,+,\circ)$. 
\end{ex}

\noindent \cref{classfinal} is also useful to count the non-isomorphic uniconnected cycle sets with odd size and a Z-group permutation group having a fixed left brace $\bar{A}\times \bar{B}$ as associated left brace.

\begin{prop}
The number of non-isomorphic uniconnected cycle sets with odd size and a Z-group permutation group having a fixed left brace $\bar{A}\times \bar{B}$ as associated left brace is exactly $(\prod_{i=1}^{v} k_i) \cdotp (\prod_{j=1}^{m} l_j)$ where $k_i=max\{q_i^{min\{\gamma_i-d_i,d_i\}-1}(q_i-1),1\}$ for all $i\in \{1,...,v\}$ and $l_j=p_j^{min\{\beta_j-f'_j,f_j\}-1}(p_j-1)$ for all $j\in \{1,...,m\}$
\end{prop}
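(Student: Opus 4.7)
The plan is to convert the isomorphism criterion of \cref{classfinal} into a counting problem and evaluate it componentwise over primes. By \cref{gentrans}, a triple $(a,b,c)\in \bar{A}\times \bar{B}$ lies in a transitive cycle base of $A$ if and only if it generates the additive group $(A,+)$; since $(A,+)=\prod_j A_j\times \prod_j B_j$ is a product of cyclic groups of pairwise coprime prime power orders, this amounts to each $a_j,b_j,c_j$ being coprime to the prime of its cyclic factor. By \cref{classfinal}, two such generators give isomorphic uniconnected cycle sets iff $a_j\equiv a'_j\pmod{q_j^{z^{(j)}_1}}$ for every $j\in\{1,\dots,v\}$ and $c_j\equiv c'_j\pmod{p_j^{z^{(j)}_2}}$ for every $j\in\{1,\dots,m\}$, where $z^{(j)}_1:=\min\{\gamma_j-d_j,d_j\}$ and $z^{(j)}_2:=\min\{\beta_j-f'_j,f_j\}$. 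Crucially, the relation imposes no condition on $b$, so the $b$-component contributes a factor of $1$ to the count.

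Because the generator condition splits componentwise and the congruences at distinct primes are independent, the total number of equivalence classes factors as the product of per-component counts. It therefore suffices to compute, for each $j$, the number $k_j$ of residue classes in $\mathbb{Z}/q_j^{z^{(j)}_1}\mathbb{Z}$ containing a generator of $A_j$, and analogously $l_j$ for $B_j$.

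For the $A_j$-count: an element of the cyclic group $A_j$ of order $q_j^{\gamma_j}$ is a generator iff it is coprime to $q_j$, a property detected modulo $q_j$. If $z^{(j)}_1\geq 1$, a residue class modulo $q_j^{z^{(j)}_1}$ refines the residue mod $q_j$, so such a class is either wholly made of generators or wholly disjoint from them; the number of generator-classes is $\phi(q_j^{z^{(j)}_1})=q_j^{z^{(j)}_1-1}(q_j-1)$. If $z^{(j)}_1=0$, which by the non-triviality of $Soc(A_j)$ means $d_j=\gamma_j$ (so $A_j$ is a trivial brace), the unique residue class contains generators and $k_j=1$. Combining both cases yields $k_j=\max\{q_j^{z^{(j)}_1-1}(q_j-1),1\}$. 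The identical argument applied to $B_j$ gives $l_j$; here the hypothesis $\alpha_{|_{B_j}}$ non-trivial from \cref{costrtutt} forces $I_j\subsetneq B_j$, hence $z^{(j)}_2\geq 1$, so the outer $\max$ is vacuous for $l_j$. Multiplying over $j$ then produces the claimed formula. The main (minor) obstacle is treating the edge case $z^{(j)}_1=0$ correctly, which is precisely what the outer maximum in the definition of $k_j$ is designed to absorb.
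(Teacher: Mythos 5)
Your proposal is correct and follows essentially the same route as the paper: reduce via \cref{classfinal} to counting generators of each cyclic factor modulo $q_i^{\min\{\gamma_i-d_i,d_i\}}$ and $p_j^{\min\{\beta_j-f'_j,f_j\}}$, and evaluate these counts as $\phi(q_i^{z_1})$ resp.\ $\phi(p_j^{z_2})$ with the maximum absorbing the degenerate case $d_j=\gamma_j$. The only difference is that you spell out the details the paper leaves implicit (the irrelevance of the $b$-component, and why the outer maximum is vacuous for the $l_j$), which is a faithful elaboration rather than a different argument.
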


\begin{proof}
By \cref{classfinal}, to show the thesis we have to count the number of the generators of $(A_i,+)$ module $q^{min\{\gamma_i-d_i,d_i\}}$, for every $i\in \{1,...,v\}$ and the number of the generators of $(B_i,+)$ module $p^{min\{\beta_j-f'_j,f_j\}}$, for every $j\in \{1,...,m\}$. The first number is equal to $max\{q_i^{min\{\gamma_i-d_i,d_i\}-1}(q_i-1),1\}$, for every $i\in \{1,...,v\}$, the second one is $p_j^{min\{\beta_j-f'_j,f_j\}-1}(p_j-1)$ for all $j\in \{1,...,m\}$, hence the thesis follows.
\end{proof}

\noindent In this section we showed that \emph{all} the uniconnected cycle set with odd size and having a Z-group permutation group can be easily constructed and classified as follows. We skip the proof, since it is a direct consequence of the previous results.

\begin{theor}\label{skipp}
Let $H=C_1\rtimes_{\alpha} C_2$ be a Z-group having odd size (by \cite[Theorem 9.4.3]{hall2018theory}, $H$ must be isomorphic to the semidirect product of cyclic groups $C_1$ and $C_2$). Then, an uniconnected cycle set with permutation group isomorphic to $H$ can be constructed as follows:
\begin{itemize}
    \item Given $\{B_{m+1},...,B_r \}$ the Sylow subgroups of $C_1$ and $\{B_{1},...,B_m \}$ the ones of $C_2$, consider the Sylow Basis $\{B_1,...,B_r \}$ of $H$;
    \item Construct all the cyclic left braces on the $B_i$ using \cite[Theorem 1]{rump2007classification};
    \item Construct $\bar{A}$ and $\bar{B}$ as in Propositions \ref{costrtutt} and \ref{cosZgroup};
    \item Give the left brace $A:=\bar{A}\times \bar{B}$, compute its multipermutation level using \cref{multilevel}
    \item Given the left brace $A:=\bar{A}\times \bar{B}$, use \cref{forind} of \cref{rump20} (where $g$ is simply a generator of the additive group of $A$ - see \cref{gentrans}) to construct all the uniconnected cycle sets $X$ having $A$ as associated left brace. By \cref{retrcyc}, $X$ has multipermutation level equal to $mpl(A)$. Finally, cut the isomorphism classes using \cref{classfinal}.
\end{itemize}
\end{theor}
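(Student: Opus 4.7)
The plan is to verify that each of the five bullets in the statement either produces the full family or cuts it down correctly, relying entirely on the results already established above. First I would fix an uniconnected cycle set $X$ with odd size and Z-group permutation group, and invoke \cref{deriv} to produce a cyclic left brace $A$ of odd size whose multiplicative group is a Z-group and such that $X$ is an uniconnected associated cycle set of $A$. Applying \cref{cosZgroup} to $A$ yields the decomposition $A=\bar A\times \bar B$ described in \cref{conve}, with $\bar A=A_1\times\cdots\times A_v$ a direct product of cyclic left braces and $\bar B=(B_{m+1}\times\cdots\times B_r)\rtimes_{\alpha}(B_1\times\cdots\times B_m)$ built on the Sylow basis of $(A,\circ)$. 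This explains the first and third bullets, and by \cite[Theorem 1]{rump2007classification} each factor $A_i$, $B_j$ is one of the left braces $B(p,k,t)$ of \cref{esecic}, giving the second bullet.

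Next I would address the fourth bullet: by \cref{multilevelcyc} the multipermutation level of $X$ is finite and equals $\mpl(A)$, and \cref{multilevel} gives the explicit formula in terms of the invariants $\gamma_i,d_i,\beta_j,f_j,f'_j$ attached to $\bar A\times \bar B$. For the fifth bullet, \cref{gentrans} shows that any element $g$ of a transitive cycle base of $A$ is a generator of $(A,+)$, so every such $g$ can be taken as a generator of the additive group and vice versa; formula \eqref{forind} of \cref{rump20} then produces, as $g$ ranges over generators of $(A,+)$, all uniconnected cycle sets whose associated left brace is $A$. The equality $\mpl(X)=\mpl(A)$ follows from \cref{retrcyc}, since $A$ is cyclic.

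Finally, to match the statement it remains to observe that the isomorphism relation among the cycle sets produced in the previous step is captured by \cref{classfinal}, which reduces the question to congruence conditions modulo $q_i^{\min\{\gamma_i-d_i,d_i\}}$ on each $A_i$ and modulo $p_j^{\min\{\beta_j-f'_j,f_j\}}$ on each $B_j$; cutting by this equivalence yields precisely the isomorphism classes claimed. Conversely, any cycle set produced by the five-step recipe has associated left brace of the form required by \cref{cosZgroup}, so by \cref{deriv} it is uniconnected with odd size and a Z-group permutation group, which gives the reverse inclusion.

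The one place that requires genuine care, rather than bookkeeping, is checking that the different parameter choices in the recipe really do exhaust all possibilities, in particular that every cyclic left brace of odd size with Z-group multiplicative group decomposes uniquely as in \cref{cosZgroup} and that no isomorphism between two uniconnected cycle sets can come from outside the automorphism class described in \cref{automorsemi}. Both points have already been settled in the previous sections (\cite[Sections 3--4]{rump2019classification} together with \cref{isoclasses,classfinal}), so the present theorem is obtained by assembling them in the stated order without further computation.
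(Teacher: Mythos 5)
Your proposal is correct and follows essentially the same route as the paper, which explicitly omits the proof as ``a direct consequence of the previous results'': you assemble exactly the intended ingredients (\cref{deriv}, \cref{cosZgroup}, \cref{multilevelcyc}, \cref{multilevel}, \cref{gentrans}, \cref{rump20}, \cref{retrcyc}, and \cref{classfinal}) in the intended order. Your added remarks on exhaustiveness and on isomorphisms being controlled by \cref{automorsemi} and \cref{isoclasses} only make explicit what the paper leaves implicit.
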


We remark that this machinery works because we only have to consider cyclic left braces. Since there exist left braces with a Z-group as multiplicative group, even size and non-cyclic additive group, we can not remove the "odd size" restriction.\\
Now, we close the section showing that the cycle sets considered in this section can uniquely be determined by $5$ parameters.

\begin{lemma}\label{inv2}
A complete set of invariants for a finite Z-group $G$ is a triple of natural numbers $(m_1,n_1,r_1)$ such that $gcd( (r_1-1) n_1, m_1)=1$, $r_1\in \{0,...,m_1-1\}$ and ${r_{1}}^{n_1}\equiv 1\ (mod \ m_1)$. In this way, $G$ can be identified with the semidirect product
$(\mathbb{Z}/ m_1 \mathbb{Z},+)\rtimes_{\alpha_{r_1}}(\mathbb{Z}/n_1\mathbb{Z},+)$ where $\alpha$ is given by $\alpha_{r_1}(1)(1):=r_1$.
\end{lemma}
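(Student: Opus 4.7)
The plan is to deduce the lemma from the classical structure theorem for finite Z-groups stated in \cite[Chapter 9]{hall2018theory}, according to which every such $G$ is metacyclic and admits a decomposition $A\rtimes B$ with $A=[G,G]$ cyclic of order $m_1:=|[G,G]|$, $B$ cyclic of order $n_1:=|G/[G,G]|$, and $\gcd(m_1,n_1)=1$.

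First I would fix generators $a$ of $A$ and $b$ of $B$ to obtain a presentation $\langle a,b\mid a^{m_1}=b^{n_1}=1,\ b^{-1}ab=a^{r_1}\rangle$ with $r_1\in\{0,\dots,m_1-1\}$, which identifies $G$ with $(\mathbb{Z}/m_1\mathbb{Z})\rtimes_{\alpha_{r_1}}(\mathbb{Z}/n_1\mathbb{Z})$. The triviality of the conjugation action of $b^{n_1}$ on $a$ immediately yields $r_1^{n_1}\equiv 1 \pmod{m_1}$. For the less immediate requirement $\gcd(m_1,(r_1-1)n_1)=1$, I would split it into $\gcd(m_1,n_1)=1$, which is granted by Hall's theorem, together with $\gcd(m_1,r_1-1)=1$; the latter follows by computing the commutator $[b,a]=a^{1-r_1}$, which forces $[G,G]=\langle a^{r_1-1}\rangle$, and then invoking the defining choice $A=[G,G]$ to conclude $\langle a^{r_1-1}\rangle=\langle a\rangle$.

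Next I would verify the converse, namely that any triple $(m_1,n_1,r_1)$ satisfying the three hypotheses defines a Z-group. The congruence $r_1^{n_1}\equiv 1 \pmod{m_1}$ is precisely what is needed for $\alpha_{r_1}$ to extend to a well-defined homomorphism $\mathbb{Z}/n_1\mathbb{Z}\to\Aut(\mathbb{Z}/m_1\mathbb{Z})$, so the semidirect product $(\mathbb{Z}/m_1\mathbb{Z})\rtimes_{\alpha_{r_1}}(\mathbb{Z}/n_1\mathbb{Z})$ is well-formed. Coprimality $\gcd(m_1,n_1)=1$ then ensures that every Sylow subgroup of this group lies, up to conjugation, inside one of the two cyclic factors, hence is cyclic, so the construction does produce a Z-group.

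The main obstacle is the genuine completeness of the triple as an invariant. The quantities $m_1$ and $n_1$ are intrinsic, being the orders of $[G,G]$ and $G/[G,G]$, but the residue $r_1$ depends on the chosen generator of $B$, since replacing $b$ by $b^{l}$ with $\gcd(l,n_1)=1$ sends $r_1$ to $r_1^{l}$ mod $m_1$. I would handle this by selecting, within each such orbit of residues under the $l$-th power action, a fixed canonical representative (for example the smallest natural number lying in the orbit) as the value of the invariant $r_1$, so that the normalized triple $(m_1,n_1,r_1)$ becomes a true isomorphism invariant of $G$.
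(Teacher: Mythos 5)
Your proposal is correct and follows essentially the same route as the paper, whose entire proof is the citation ``It follows by \cite[Chapter 9]{hall2018theory}'': you are simply unpacking Hall's classification of groups with all Sylow subgroups cyclic. Your added observation that $r_1$ is only well-defined up to the orbit $r_1\mapsto r_1^{l}$ (for $\gcd(l,n_1)=1$) and must be normalized to a canonical representative is a genuine detail that the paper's one-line proof glosses over, and it is handled correctly.
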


\begin{proof}
It follows by \cite[Chapter 9]{hall2018theory}.
\end{proof}

\begin{lemma}\label{inv4}
A complete set of invariants for a cyclic left brace $A$ having odd size and with a Z-group multiplicative group is a quadruple of natural numbers $(m_1,n_1,r_1,t)$ such that $gcd((r_1-1)n_1,m_1)=1$, $r_1\in \{0,...,m_1-1\}$ and $r_1^{n_1}\equiv 1\ (mod\ m_1)$ and $t$ is such that every prime which divides $m_1 n_1$ divides $t$. 
\end{lemma}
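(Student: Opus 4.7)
The plan is to assemble the invariant quadruple by combining \cref{inv2} for the Z-group $(A,\circ)$ with the classification of cyclic prime-power left braces recalled in \cref{esecic}.

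First I would apply \cref{inv2} to the multiplicative group $(A,\circ)$ to obtain the triple $(m_1,n_1,r_1)$ satisfying the three stated arithmetic conditions; this is already a complete invariant of $(A,\circ)$ as a Z-group, and in particular it fixes the order $|A|=m_1 n_1$ and its prime factorization. Next, by \cite[Corollary~2]{rump2019classification} the additive group $(A,+)$ is cyclic, so $A$ is a cyclic left brace of odd order. By \cite{rump2007classification} each Sylow-$p$ component $A_p$ is isomorphic to a unique $B(p,k_p,t_p)$ with $k_p=v_p(|A|)$ and $1\le t_p\le k_p$. Moreover, by \cite[Sections~3--4]{rump2019classification} the whole brace $A$ is determined by its Sylow pieces together with the actions $\alpha_{i,j}$, and by \cite[Proposition~5]{rump2019classification} these actions must be trivial on every non-trivial $B(p,k,t_p)$ factor. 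Hence once the multiplicative group is fixed (through $r_1$), the remaining isomorphism data of $A$ reduces to the sequence $(t_p)_{p\mid m_1 n_1}$.

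Third, I would encode the sequence $(t_p)$ into a single natural number by setting $t:=\prod_{p\mid m_1 n_1} p^{t_p}$. The map $t\mapsto(v_p(t))_p$ is a bijection between natural numbers in which every prime of $m_1 n_1$ occurs (with $v_p(t)\le k_p$) and the collection of admissible exponent sequences; the lower bound $t_p\ge 1$ is exactly the divisibility condition from the statement. Conversely, given any admissible quadruple, I would reconstruct $A$ by building each $B(p,k_p,v_p(t))$, taking their direct product, and gluing via the semidirect product prescribed by $r_1$ as in \cref{costrtutt} and \cref{cosZgroup}, thereby producing a cyclic left brace of odd size with Z-group multiplicative group realizing exactly those invariants.

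The main subtle point is to verify that the decomposition of $A$ into Sylow pieces is compatible with the semidirect action $\alpha$ appearing in \cref{cosZgroup}, so that the local invariant $t_p$ is genuinely independent of how the Sylow factors are glued together. This is precisely what \cite[Proposition~5]{rump2019classification} guarantees: the non-trivial brace factors are forced into the kernel of $\alpha$, so $\alpha$ only permutes trivial brace factors and leaves every $B(p,k,t_p)$ untouched. Once this is in place, both directions of the invariant correspondence follow routinely.
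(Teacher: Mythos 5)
Your proposal is correct and follows essentially the same route as the paper, whose proof is just the one-line citation of \cref{inv2}, \cite[Theorem 1]{rump2007classification} and \cite[Sections 4--5]{rump2019classification}; you have simply unpacked those references, identifying $(m_1,n_1,r_1)$ with the Z-group structure, the Sylow-wise parameters $t_p$ of the braces $B(p,k_p,t_p)$ with the single integer $t$, and using \cite[Proposition 5]{rump2019classification} to see that the semidirect gluing adds no further data. No gap beyond what the paper itself leaves implicit.
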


\begin{proof}
It follows by \cref{inv2}, \cite[Theorem 1]{rump2007classification} and \cite[Sections 4-5]{rump2019classification}.
\end{proof}

\begin{theor}\label{inv}
A complete set of invariants for a finite uniconnected cycle set $X$ having odd size and with a Z-group permutation group is the following:
\begin{itemize}
    \item[1)] a quadruple of natural numbers $(m_1,n_1,r_1,t)$ as in \cref{inv4} which determines the structure of the cyclic left brace $A=\bar{A}\times \bar{B}$ associated to $X$;
    \item[2)] a generator of the additive group $(\bar{A},+)\times (B_1,+)\times...\times (B_m,+)$, where $m$ is the natural number as in \cref{conve}.
\end{itemize}
\end{theor}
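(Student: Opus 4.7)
The plan is to bolt together three results already established in the paper: the bijective correspondence of Corollary~\ref{deriv} between uniconnected cycle sets (of odd size with Z-group permutation group) and triples ``cyclic left brace of the form $\bar A\times\bar B$ + transitive cycle base + element $g$ of that base''; the complete invariant $(m_1,n_1,r_1,t)$ for the cyclic left brace part provided by Lemma~\ref{inv4}; and the isomorphism criterion of Theorem~\ref{classfinal} for cycle sets sharing the same associated left brace. So I would first argue that the isomorphism class of $X$ is carried by two independent pieces of data, namely (i) the isomorphism type of its associated left brace $A$ and (ii) the choice of the generating element $g$ modulo the equivalence induced by Theorem~\ref{classfinal}; then I would identify (i) with the datum of item~1) and (ii) with the datum of item~2).

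For the invariant of item~1), I would invoke Lemma~\ref{inv4} verbatim: $A$ decomposes as $\bar A\times\bar B$ as in Convention~\ref{conve}, and the quadruple $(m_1,n_1,r_1,t)$ completely determines this cyclic left brace of odd size with Z-group multiplicative group. The fact that $A$ is itself an isomorphism invariant of $X$ follows because $\mathcal G(X)\cong(A,\circ)$ together with the $\lambda$-action recovers $A$ up to isomorphism through Proposition~\ref{rump20}.

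For the invariant of item~2), Lemma~\ref{gentrans} forces $g$ to be a generator of the additive group $(A,+)=(\bar A,+)\times(B_{m+1}\times\cdots\times B_r,+)\times(B_1\times\cdots\times B_m,+)$. The key simplification — already embedded in the proof of Lemma~\ref{isoclasses} — is that since $B_{m+1}\times\cdots\times B_r$ is a trivial left brace, the component of $g$ in that factor is irrelevant up to the action of $\operatorname{Aut}(A,+,\circ)$; what survives is the projection of $g$ to $(\bar A,+)\times(B_1,+)\times\cdots\times(B_m,+)$, and Theorem~\ref{classfinal} describes exactly when two such projections give isomorphic cycle sets. Thus specifying a generator of this additive subgroup (with the understanding that equivalent generators in the sense of Theorem~\ref{classfinal} give the same cycle set) is precisely the second required invariant.

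The main obstacle I anticipate is not computational but rather conceptual: to make precise that items~1) and~2) are truly independent coordinates on the set of isomorphism classes, i.e.\ that no information is lost or double-counted when $A$ varies and $g$ varies jointly. Independence is clean because, as noted above, $A$ is recoverable from $X$, and once $A$ is fixed every additive generator of $(A,+)$ generates a transitive cycle base under the $\lambda$-action, so the two degrees of freedom are orthogonal. With that cleanup, the theorem follows by concatenating Corollary~\ref{deriv}, Lemma~\ref{inv4}, and Theorem~\ref{classfinal}.
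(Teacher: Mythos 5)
Your proposal is correct and follows essentially the same route as the paper, whose own proof simply cites \cref{inv4} and \cref{classfinal}; you have merely made explicit the supporting facts (the correspondence of \cref{deriv}, the generator condition of \cref{gentrans}, and the irrelevance of the $B_{m+1}\times\cdots\times B_r$ component from \cref{isoclasses}) that the paper leaves implicit. No substantive difference in approach.
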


\begin{proof}
It follows by \cref{inv4} and \cref{classfinal}.
\end{proof}

Note that if a finite uniconnected cycle set $X$ has odd size and cyclic permutation group, by the previous theorem we obtain that it is uniquely determine by the quadruple $(1,|X|,1,t)$ and a generator of the additive group $(\bar{A},+)$, hence the set of invariants in this case has $3$ elements. This is consistent with \cite[Theorem  3.15]{jedlivcka2021cocyclic}.

\section{The square-free case}

In this section, we apply the results obtained before to the uniconnected cycle sets of square-free odd order. To exhibit the result in a comfortable way, in the following we use the same notation of the previous section.

\vspace{2mm}

\noindent At first, we show that the uniconnected cycle sets of square-free odd order belong to the family of the ones having a Z-group permutation group and that they are all of multipermutation level at most $2$. 

\begin{prop}\label{sqf2}
Let $X$ be an uniconnected cycle set of square-free odd order. Then, $\mathcal{G}(X)$ is a Z-group and $X$ has multipermutation level at most $2$. Moreover, it has multipermutation level $1$ if and only if the permutation group $\mathcal{G}(X)$ is abelian.
\end{prop}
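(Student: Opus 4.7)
The plan is to reduce the statement to the formula given in \cref{multilevel} and exploit the fact that square-free cyclic left braces must be trivial on each Sylow component.

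First, I would observe that since $X$ is uniconnected, the group $\mathcal{G}(X)$ acts regularly on $X$, so $|\mathcal{G}(X)|=|X|$ is square-free. A classical group-theoretic fact says that any finite group of square-free order has all Sylow subgroups of prime order, hence cyclic; therefore $\mathcal{G}(X)$ is automatically a Z-group. This is the easy half.

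Next, let $A$ be the cyclic left brace associated to $X$ via \cref{rump20}, so that $(A,\circ)\cong \mathcal{G}(X)$; cyclicity of the additive group comes from \cite[Corollary 2]{rump2019classification} as already used in \cref{multilevelcyc}. By \cref{cosZgroup}, $A$ can be written in the form $\bar{A}\times\bar{B}$ of \cref{costrtutt}, and by \cref{retrcyc} we have $mpl(X)=mpl(A)$. The core of the argument is then to evaluate the formula from \cref{multilevel}. Since $|A|$ is square-free, each factor $A_j$ has $\gamma_j=1$ and each factor $B_i$ has $\beta_i=1$; moreover, because a left brace of prime order is necessarily trivial, $Soc(A_j)=A_j$ and $Soc(B_i)=B_i$, giving $d_j=1$ and $f_i=1$. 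For the indices $i\in\{1,\dots,m\}$, the constraint in \cref{costrtutt} that $\alpha|_{B_i}$ be non-trivial combined with $|B_i|=p_i$ prime forces $I_i=B_i\cap\mathrm{Ker}(\alpha)=\{0\}$, i.e.\ $f'_i=0$. Substituting into \cref{multilevel} yields
\[
mpl(X)=\max\Bigl\{\max_{j}\!\bigl\lceil\tfrac{1-1}{1}\bigr\rceil,\; \bigl\lceil\tfrac{1-0}{1}\bigr\rceil,\dots,\bigl\lceil\tfrac{1-0}{1}\bigr\rceil\Bigr\}+1,
\]
which is $2$ when $m\geq 1$ and $1$ when $m=0$. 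In either case $mpl(X)\leq 2$.

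Finally, for the ``multipermutation level $1$ iff abelian'' equivalence, note that $m=0$ precisely means that $\bar{B}$ reduces to $\{0\}$, so $A=\bar{A}$ is a direct product of trivial left braces of distinct prime orders; then $(A,\circ)=(A,+)$ is abelian. Conversely, if $\mathcal{G}(X)\cong (A,\circ)$ is abelian, the semidirect-product action $\alpha$ in $\bar{B}$ must be trivial on every $B_i$ with $i\leq m$, which by the defining condition of \cref{costrtutt} is only possible if $m=0$. Combining these, $mpl(X)=1$ iff $m=0$ iff $\mathcal{G}(X)$ is abelian, finishing the proof.

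The computation is essentially routine once the decomposition of $A$ is in place; the only subtle point is recognizing that each $I_i$ vanishes by the non-triviality hypothesis on $\alpha|_{B_i}$ together with primality of $|B_i|$, and that this is precisely what distinguishes the abelian from the non-abelian case.
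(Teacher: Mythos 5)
Your proposal is correct and follows essentially the same route as the paper: pass to the associated left brace via \cref{rump20}, use \cite[Chapter 9]{hall2018theory} and \cite[Corollary 2]{rump2019classification} to get the Z-group property and cyclicity of the additive group, reduce to a (semidirect) product of trivial prime-order braces, and conclude via the formula of \cref{multilevel} together with \cref{retrcyc}. The only cosmetic difference is that you derive both directions of the ``$mpl=1$ iff abelian'' equivalence from the formula (via $m=0$), whereas the paper notes directly that multipermutation level $1$ forces $\mathcal{G}(X)$ to be cyclic; both arguments are valid.
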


\begin{proof}
By \cref{rump20}, there exist a left brace $A$ such that $X$ is isomorphic to an uniconnected associated cycle set $(A,\bullet)$. Since $(A,\circ)$ (which is equal to $\mathcal{G}(X)$) has square-free order, by \cite[Chapter 9]{hall2018theory} it is a $Z-$group and by  \cite[Corollary 2]{rump2019classification} $A$ is a cyclic left brace. By \cite[Chapter 9]{hall2018theory} and \cite[Sections 3-4]{rump2019classification}, $A$ must be isomorphic to a semidirect product of two trivial cyclic left braces $C_1\rtimes C_2$. By \cref{multilevel}, $A$ has multipermutation level at most $2$ and by \cref{retrcyc} the first part of the thesis follows.\\
Now, if $X$ has multipermutation level $1$ then $\mathcal{G}(X)$ is cyclic and hence abelian. Conversely, if  $\mathcal{G}(X)$ is abelian it follows that $(A,\circ)$ is abelian and hence $A$ is a trivial cyclic left brace of multipermutation level $1$, therefore by \cref{retrcyc} $X$ also has multipermutation level $1$.
\end{proof}


\begin{cor}\label{teosq}
Let $A:=\bar{A}\times \bar{B}$ be a cyclic left brace of square-free odd order. Moreover, let $g:=(g_1,g_2,g_3)\in \bar{A}\times \bar{B}$ be an element of a transitive cycle base of $A$, where $g_1\in \bar{A}$, $g_2\in B_{m+1}\times...\times B_r$ and $g_3\in B_1\times...\times B_m$. Then, the binary operation $\bullet_g$ on $A$ given by
$$(a_1,a_2,a_3)\bullet_g (b_1,b_2,b_3):=\lambda_{(a_1,a_2,a_3)}(g_1,g_2,g_3)^- \circ (b_1,b_2,b_3) $$
for all $(a_1,a_2,a_3), (b_1,b_2,b_3)\in A $ makes $A$ into an uniconnected cycle set having multipermutation level $2$. Moreover, if $h:=(h_1,h_2,h_3)$ is another element of a transitive cycle base of $A$ and $\bullet_h$ the binary operation on $A$ defined as $\bullet_g$, we have that $(A,\bullet_g)$ is isomorphic to $(A,\bullet_{h})$ if and only if $h_3=g_3$.
\end{cor}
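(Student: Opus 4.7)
The plan is to derive both assertions as direct specializations of the general results in the previous section; no new machinery should be needed.

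For the first assertion, the fact that $(A,\bullet_g)$ is an uniconnected cycle set is immediate from \cref{rump20}, since by hypothesis $g$ belongs to a transitive cycle base and $\bullet_g$ is exactly the operation produced there. To compute the multipermutation level I would evaluate the formula of \cref{multilevel} in the square-free setting: each $A_i$ has prime order $q_i$, so $Soc(A_i)=A_i$ and $\gamma_i=d_i=1$, contributing $\lceil(\gamma_i-d_i)/d_i\rceil=0$; each $B_j$ with $j\in\{1,\dots,m\}$ has prime order $p_j$, so $f_j=\beta_j=1$, while the non-triviality of $\alpha_{|_{B_j}}$ (imposed in \cref{costrtutt}) forces $Ker(\alpha)\cap B_j=\{0\}$ and hence $I_j=\{0\}$, $f_j'=0$, giving $\lceil(\beta_j-f_j')/f_j\rceil=1$. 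Plugging these into \cref{multilevel} produces $mpl(A)=\max\{0,1\}+1=2$ whenever $m\ge 1$; the degenerate case $m=0$ gives a trivial left brace with $mpl(A)=1$, and is already covered by \cref{sqf2}.

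For the second assertion, the plan is to specialize \cref{classfinal}. The two modular conditions there depend on $z_1=\min\{\gamma_j-d_j,d_j\}$ and $z_2=\min\{\beta_j-f_j',f_j\}$. In the square-free setting the computations above give $z_1=\min\{0,1\}=0$ for every $j\in\{1,\dots,v\}$ and $z_2=\min\{1,1\}=1$ for every $j\in\{1,\dots,m\}$. Thus c1 becomes vacuous (a congruence modulo $1$), and c2 reduces to $c_j'\equiv c_j\ (mod\ p_j)$; since $B_j\cong\mathbb{Z}/p_j\mathbb{Z}$ this is equivalent to $c_j'=c_j$ for every $j\in\{1,\dots,m\}$, which is exactly the condition $h_3=g_3$ in the statement. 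The only step that requires a moment of thought is the verification that $I_j=\{0\}$ under the square-free hypothesis, which reduces to the elementary fact that a non-trivial homomorphism from a cyclic group of prime order has trivial kernel; after that observation, everything else is a purely arithmetic specialization of the results of the previous section, so I expect no real obstacle.
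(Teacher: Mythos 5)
Your proposal is correct and follows essentially the same route as the paper: the paper's proof is precisely the observation that in the square-free case $d_j=\gamma_j$, $f_j=\beta_j$ and $f'_j=0$, after which everything follows from \cref{classfinal}. Your additional explicit check of the multipermutation level via \cref{multilevel} (and your remark that $mpl=2$ requires $m\ge 1$, the $m=0$ case being covered by \cref{sqf2}) only makes explicit what the paper leaves to the preceding results.
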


\begin{proof}
Since in this case $d_j=\gamma_j$ for all $j\in \{1,...,v\}$, $f_j=\beta_j$ and $f'_j=0$ for all $j\in \{1,...,m\}$, the result follows by \cref{classfinal}. 
\end{proof}

\noindent It is well-known that all the group having square-free order are Z-group. Moreover, by a result due by Rump, given a group $H$ of square-free order, there exist a unique left brace $A$ such that $(A,\circ)\cong H$ (see \cite[Corollary 2]{rump2019classification}) and it always is a semidirect product of trivial left braces. Therefore, the previous theorem allow to give a precise classification of all the uniconnected cycle sets of square-free odd order.

\begin{cor}\label{classifsq}
Let $X$ be an uniconnected cycle set of square-free odd order with multipermutation level $2$ and let $A$ be a cyclic left brace such that $X$ is an uniconnected associated cycle set. Moreover, with the same notation of the previous corollary, let $g:=(g_1,g_2,g_3)$ be an element of $A$ such that $X\cong (A,\bullet_g)$. Let $Y$ be an other uniconnected cycle set such that $\mathcal{G}(Y)\cong \mathcal{G}(X)$. Then, there exist an element $h:=(h_1,h_2,h_3)\in A$ such that $Y\cong (A,\bullet_h)$ and $X\cong Y$ if and only if $g_3=h_3$.
\end{cor}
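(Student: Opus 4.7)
The plan is to reduce the statement to the previous \cref{teosq} by showing that any uniconnected cycle set $Y$ with $\mathcal{G}(Y)\cong\mathcal{G}(X)$ can be realised as an uniconnected associated cycle set of the very same left brace $A$; once this is in hand, the ``if and only if'' is nothing but a restatement of \cref{teosq}.

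First I would invoke \cref{rump20} applied to $Y$ in order to produce a left brace $A'$ and an element $h$ lying in a transitive cycle base of $A'$ such that $Y\cong (A',\bullet_h)$, with $(A',\circ)\cong \mathcal{G}(Y)$. Since $\mathcal{G}(Y)\cong \mathcal{G}(X)\cong (A,\circ)$ and $|X|$ is odd and square-free, the multiplicative group of $A'$ is a $Z$-group of odd square-free order. At this point I would apply \cite[Corollary 2]{rump2019classification}, which guarantees that a cyclic left brace of odd square-free order is uniquely determined (up to isomorphism of left braces) by its multiplicative group; hence $A'$ is isomorphic to $A$ as a left brace. Transporting $h$ along such an isomorphism yields an element $h=(h_1,h_2,h_3)\in A=\bar{A}\times\bar{B}$ belonging to a transitive cycle base of $A$ such that $Y\cong (A,\bullet_h)$, as required by the statement.

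With both $X$ and $Y$ realised as $(A,\bullet_g)$ and $(A,\bullet_h)$ for the same cyclic left brace $A=\bar{A}\times\bar{B}$, the isomorphism criterion $X\cong Y\iff g_3=h_3$ follows at once from \cref{teosq}, which already covers the square-free odd case and asserts exactly that the only component that matters for the isomorphism class is the third one.

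The main obstacle I expect is the existence part, and more precisely the lift from an isomorphism of multiplicative groups to an isomorphism of left braces $A'\cong A$: this step relies crucially on the classification of cyclic left braces of odd square-free order given in \cite[Corollary 2]{rump2019classification} and cannot be avoided, since in general distinct left braces may share the same multiplicative group. Once this lift is accepted, the remainder is bookkeeping together with a direct appeal to \cref{teosq}.
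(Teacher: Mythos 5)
Your proposal is correct and follows essentially the same route as the paper: realise $Y$ over a left brace via \cref{rump20}, identify that brace with $A$ using the uniqueness statement of \cite[Corollary 2]{rump2019classification} for square-free order, and then conclude with \cref{teosq}. You merely spell out in more detail the transport of $h$ along the brace isomorphism, which the paper leaves implicit.
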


\begin{proof}
Since $\mathcal{G}(Y)\cong \mathcal{G}(X) $, by \cite[Corollary 2]{rump2019classification} it follows that $A$ is the cyclic left brace associated to $Y$ hence by \cref{rump20} there exist an element $h:=(h_1,h_2,h_3)\in A$ such that $Y\cong (A,\bullet_h)$. The rest of the proof follows by \cref{teosq}.
\end{proof}

\begin{cor}
Let $X$ be an uniconnected cycle set having square-free order and with permutation group $H$. Then, $X$ has multipermutation level $1$ or it has multipermutation level $2$ and it is an uniconnected associated cycle set constructed as in \cref{teosq} starting from the unique left brace $A$ having multiplicative group isomorphic to $H$.
\end{cor}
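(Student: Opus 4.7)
The plan is to assemble the claim from the results already proved in this section. First, by \cref{sqf2}, any such $X$ has $\mpl(X) \le 2$, and if $\mpl(X) = 1$ there is nothing more to say. So the substantive case is $\mpl(X) = 2$: here I would invoke \cref{rump20} to write $X$ as an uniconnected associated cycle set $(A,\bullet)$ of some left brace $A$ with $(A,\circ) \cong \mathcal{G}(X) = H$.

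The next step is to identify $A$ uniquely. Since $|A| = |X|$ is square-free (and odd, as is implicit throughout the section in view of \cref{sqf2}), $H$ is automatically a Z-group by \cite[Chapter 9]{hall2018theory}, and \cite[Corollary 2]{rump2019classification} guarantees that, up to isomorphism, there is a single left brace whose multiplicative group is isomorphic to $H$, and that this left brace must be cyclic. Thus $A$ is the cyclic left brace appearing in the statement, and by \cref{cosZgroup} it admits the decomposition $A = \bar{A} \times \bar{B}$ used throughout the section.

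Finally, \cref{teosq} provides the concrete form of $X$: choosing a generator $g = (g_1, g_2, g_3)$ of a transitive cycle base of $A$, one has $X \cong (A, \bullet_g)$, which is exactly the construction described in the corollary. The main (and only non-formal) point is the uniqueness of the cyclic left brace $A$ associated to $H$, which rests on Rump's \cite[Corollary 2]{rump2019classification}; once that is in hand, the rest of the argument is a direct assembly of \cref{sqf2}, \cref{rump20}, \cref{cosZgroup}, and \cref{teosq}, and no further calculation is needed.
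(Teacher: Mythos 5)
Your proposal is correct and matches the paper's intent: the paper's own proof is simply ``Straightforward,'' meaning exactly the assembly you carry out of \cref{sqf2}, \cref{rump20}, the uniqueness of the left brace from \cite[Corollary 2]{rump2019classification}, and \cref{teosq}. No gaps.
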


\begin{proof}
Straighforward.
\end{proof}

\noindent By \cref{sqf2}, for every square-free number $n$ there is only one uniconnected cycle set having order $n$ and with abelian permutation group: it is the one having multipermutation level $1$. Now, we give an explicit formula to count the non-isomorphic uniconnected cycle sets of square-free odd order with non-abelian permutation group. 

\begin{cor}
Let $H$ be a non-abelian group having square-free odd order and let $(A,+,\circ)$ be the cyclic left brace such that $(A,\circ)\cong H$. Then, the number of non-isomorphic uniconnected cycle sets of square-free order having permutation group isomorphic to $H$ is $\phi(|B_1\times...\times B_m|)$, where $\phi$ is the Euler function.
\end{cor}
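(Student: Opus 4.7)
The plan is to count directly by combining \cref{classifsq} with \cref{gentrans}. Since $H$ is non-abelian, \cref{sqf2} forces every uniconnected cycle set with permutation group $H$ to have multipermutation level exactly $2$, and in the semidirect decomposition $A=\bar{A}\times \bar{B}$ encoded by \cref{cosZgroup} we must have $m\geq 1$ (otherwise $(A,\circ)$ would be abelian). In particular $B_1\times\cdots\times B_m$ is non-trivial.

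First I would invoke \cref{classifsq}: the isomorphism classes of uniconnected cycle sets whose permutation group is isomorphic to $H$ are in bijection with the distinct third coordinates $g_3\in B_1\times\cdots\times B_m$ arising from elements $g=(g_1,g_2,g_3)$ of transitive cycle bases of $A$. This reduces the problem to identifying and counting the admissible $g_3$.

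Next, I would characterise the admissible $g_3$. By \cref{gentrans}, an element of a transitive cycle base generates the cyclic additive group
$$(A,+)=(\bar{A},+)\times (B_{m+1}\times\cdots\times B_r,+)\times (B_1\times\cdots\times B_m,+),$$
and, as recalled just before \cref{gentrans}, the $\lambda$-orbit of any generator of $(A,+)$ is itself a transitive cycle base. Because the three factors above come from disjoint sets of primes their orders are pairwise coprime, so $(g_1,g_2,g_3)$ generates $(A,+)$ if and only if each $g_i$ generates the corresponding factor. Thus the admissible values of $g_3$ are exactly the generators of the cyclic group $(B_1\times\cdots\times B_m,+)$, of which there are $\phi(|B_1\times\cdots\times B_m|)$, yielding the stated count.

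The only subtlety is verifying that every generator $g_3$ really extends to a transitive-cycle-base element rather than being constrained by the choices of $g_1,g_2$; but this is immediate because $\bar{A}$ and $B_{m+1}\times\cdots\times B_r$ are (possibly trivial) cyclic groups admitting generators, and by coprimality any such choice combined with $g_3$ produces a generator of $(A,+)$. Hence no further obstruction arises and the count is exact.
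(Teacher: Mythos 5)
Your proposal is correct and follows essentially the same route as the paper: both reduce via \cref{classifsq} to counting the possible third coordinates $g_3$ of a transitive-cycle-base element and then use \cref{gentrans} to identify these with the generators of $(B_1\times\cdots\times B_m,+)$, giving $\phi(|B_1\times\cdots\times B_m|)$. Your write-up merely makes explicit the coprimality argument and the extendability of any generator $g_3$, details the paper leaves implicit.
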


\begin{proof}
By \cref{gentrans} and \cref{classifsq}, it is sufficient to count the possible values $g_3$ of a generator $(g_1,g_2,g_3)$ of the additive group $(A,+)$, and these clearly are $\phi(|B_1\times...\times B_m|)$.
\end{proof}

\bibliographystyle{elsart-num-sort}
\bibliography{Bibliography}

\def\cprime{$'$}
\begin{thebibliography}{10}
\expandafter\ifx\csname url\endcsname\relax
  \def\url#1{\texttt{#1}}\fi
\expandafter\ifx\csname urlprefix\endcsname\relax\def\urlprefix{URL }\fi

\bibitem{bachiller2015classification}
D.~Bachiller, Classification of braces of order p3, Journal of Pure and Applied
  Algebra 219~(8) (2015) 3568--3603.
\newline\urlprefix\url{https://doi.org/10.1515/forum-2015-0240}

\bibitem{bachiller2016solutions}
D.~Bachiller, F.~Ced{\'o}, E.~Jespers, Solutions of the {Yang--Baxter} equation
  associated with a left brace, J. Algebra 463 (2016) 80--102.

\bibitem{cacsp2018}
M.~Castelli, F.~Catino, G.~Pinto, Indecomposable involutive set-theoretic
  solutions of the {Yang-Baxter} equation, J. Pure Appl. Algebra 220~(10)
  (2019) 4477--4493.
\newline\urlprefix\url{https://doi.org/10.1016/j.jpaa.2019.01.017}

\bibitem{capiru2020}
M.~Castelli, G.~Pinto, W.~Rump, On the indecomposable involutive set-theoretic
  solutions of the {Y}ang-{B}axter equation of prime-power size, Comm. Algebra
  48~(5) (2020) 1941--1955.
\newline\urlprefix\url{https://doi.org/10.1080/00927872.2019.1710163}

\bibitem{cedo2014braces}
F.~Ced{\'o}, E.~Jespers, J.~Okni{\'n}ski, Braces and the {Yang-Baxter}
  equation, Comm. Math. Phys. 327~(1) (2014) 101--116.
\newline\urlprefix\url{https://doi.org/10.1007/s00220-014-1935-y}

\bibitem{cedo2020primitive}
F.~Ced{\'o}, E.~Jespers, J.~Okninski, Primitive set-theoretic solutions of the
  {Yang-Baxter} equation, arXiv preprint arXiv:2003.01983.
\newline\urlprefix\url{https://arxiv.org/pdf/2003.01983.pdf}

\bibitem{drinfeld1992some}
V.~G. Drinfel\cprime~d, On some unsolved problems in quantum group theory, in:
  Quantum groups ({L}eningrad, 1990), vol. 1510 of Lecture Notes in Math.,
  Springer, Berlin, 1992, pp. 1--8.
\newline\urlprefix\url{https://doi.org/10.1007/BFb0101175}

\bibitem{etingof1998set}
P.~Etingof, T.~Schedler, A.~Soloviev, Set-theoretical solutions to the {Quantum
  Yang-Baxter} equation, Duke Math. J. 100~(2) (1999) 169--209.
\newline\urlprefix\url{http://doi.org/10.1215/S0012-7094-99-10007-X}

\bibitem{gateva2008matched}
T.~Gateva-Ivanova, S.~Majid, Matched pairs approach to set theoretic solutions
  of the {Yang--Baxter} equation, J. Algebra 319~(4) (2008) 1462--1529.
\newline\urlprefix\url{https://doi.org/10.1016/j.jalgebra.2007.10.035}

\bibitem{gateva1998semigroups}
T.~Gateva-Ivanova, M.~Van~den Bergh, Semigroups of {I-Type}, J. Algebra 206~(1)
  (1998) 97--112.
\newline\urlprefix\url{https://doi.org/10.1006/jabr.1997.7399}

\bibitem{hall2018theory}
M.~Hall, The theory of groups, Courier Dover Publications, 2018.

\bibitem{JePiZa20x}
P.~Jedli{\v{c}}ka, A.~Pilitowska, A.~Zamojska-Dzienio, Indecomposable
  involutive solutions of the {Y}ang-{B}axter equation of multipermutational
  level $2$ with abelian permutation group, Forum Math. 2020.
\newline\urlprefix\url{https://doi.org/10.1515/forum-2021-0130}

\bibitem{jedlivcka2021cocyclic}
P.~Jedli{\v{c}}ka, A.~Pilitowska, A.~Zamojska-Dzienio, Cocyclic braces and
  indecomposable cocyclic solutions of the {Yang-Baxter} equation, arXiv
  preprint arXiv:2107.12319.
\newline\urlprefix\url{https://arxiv.org/pdf/2107.12319.pdf}

\bibitem{RaVe21}
S.~Ram{\'i}rez, L.~Vendramin, Decomposition theorems for involutive solutions
  to the {Y}ang-{B}axter equation, Int. Math. Res. Not. IMRN.
\newline\urlprefix\url{https://doi.org/10.1093/imrn/rnab232}

\bibitem{robinson2012course}
D.~J. Robinson, A Course in the Theory of Groups, vol.~80, Springer Science \&
  Business Media, 2012.
\newline\urlprefix\url{http://doi.org/10.1007/978-1-4419-8594-1}

\bibitem{rump2005decomposition}
W.~Rump, A decomposition theorem for square-free unitary solutions of the
  quantum {Y}ang-{B}axter equation, Adv. Math. 193 (2005) 40--55.
\newline\urlprefix\url{https://doi.org/10.1016/j.aim.2004.03.019}

\bibitem{rump2007braces}
W.~Rump, Braces, radical rings, and the quantum {Y}ang-{B}axter equation, J.
  Algebra 307~(1) (2007) 153--170.
\newline\urlprefix\url{https://doi.org/10.1016/j.jalgebra.2006.03.040}

\bibitem{rump2007classification}
W.~Rump, Classification of cyclic braces, Journal Pure Appl. Algebra 209~(3)
  (2007) 671--685.
\newline\urlprefix\url{https://doi.org/10.1016/j.jpaa.2006.07.001}

\bibitem{rump2019classification}
W.~Rump, Classification of cyclic braces, {II}, Trans. Amer. Math. 372~(1)
  (2019) 305--328.
\newline\urlprefix\url{https://doi.org/10.1090/TRAN%2F7569}

\bibitem{rump2020}
W.~Rump, Classification of indecomposable involutive set-theoretic solutions to
  the {Y}ang-{B}axter equation, Forum Math. 32~(4) (2020) 891--903.
\newline\urlprefix\url{https://doi.org/10.1515/forum-2019-0274}

\bibitem{Ru20}
W.~Rump, Cocyclic solutions to the {Y}ang-{B}axter equation, Proc. Amer. Math.
  Soc. 149~(2) (2021) 471--479.
\newline\urlprefix\url{https://doi.org/10.1090/proc/15220}

\bibitem{smock}
A.~Smoktunowicz, A.~Smoktunowicz, Set-theoretic solutions of the
  {Y}ang-{B}axter equation and new classes of {R}-matrices, Linear Algebra
  Appl. 546 (2018) 86--114.
\newline\urlprefix\url{https://doi.org/10.1016/j.laa.2018.02.001}

\end{thebibliography}

\end{document}